\newcommand{\T}{{\cal T}}
\newcommand{\Real}{\mathbb R}
\newcommand{\To}{\longrightarrow}
\newcommand {\cppp}{\mathfrak{X}(TM)}
\def\Section#1{\vspace{30truept}\addtocounter{section}{1}\setcounter{thm}{0}\setcounter{equation}{0}
{\noindent\Large\bf\arabic{section}.~~#1}\par \vspace{12pt}}
\newtheorem{thm}{Theorem}[section]
\newtheorem{cor}[thm]{Corollary}
\newtheorem{lem}[thm]{Lemma}
\newtheorem{prop}[thm]{Proposition}
\newtheorem{defn}[thm]{Definition}
\newtheorem{rem}[thm]{Remark}
\numberwithin{equation}{section}
\newcommand\undersym[2]{\raisebox{-7pt}{\tiny$#2$}{\kern-8pt}\mbox{$#1$}}
\newcommand\undersymm[2]{\raisebox{-8pt}{\tiny$#2$}{\kern-15pt}\mbox{$#1$}}
\newcommand\overast[1]{\raisebox{9pt}{\small$\ast$}{\kern-9pt}\mbox{$#1$}}
\newcommand\overlind[1]{\raisebox{10pt}{\small$\overline{{\hspace{2pt}}\star}$}{\kern-7.5pt}\mbox{$#1$}}
\newcommand\overlinc[1]{\raisebox{10pt}{\tiny$\overline{{\hspace{2pt}}\circ}$}{\kern-7.5pt}\mbox{$#1$}}
\newcommand\overlina[1]{\raisebox{10pt}{\small$\overline{{\hspace{1pt}}\ast}$}{\kern-7.5pt}\mbox{$#1$}}
\newcommand\overcirc[1]{\raisebox{10pt}{\tiny{$\circ$}}{\kern-7.5pt}\mbox{$#1$}}
\newcommand\overdiamond[1]{\raisebox{10pt}{\small$\star$}{\kern-7.5pt}\mbox{$#1$}}
\newcommand\tovercirc[1]{\raisebox{5pt}{\tiny{$\circ$}}{\kern-5.5pt}\mbox{$#1$}}
\newcommand\toverdiamond[1]{\raisebox{5pt}{\tiny$\star$}{\kern-5.5pt}\mbox{$#1$}}
\newcommand\toverast[1]{\raisebox{5pt}{\tiny$\ast$}{\kern-5pt}\mbox{$#1$}}
\begin{document}
\title{\bf{Existence and uniqueness of Chern connection in the Klein-Grifone approach}}
\author{\bf{ Nabil L. Youssef$^{\,1}$ and S. G.
Elgendi$^{2}$}}
\date{}
\maketitle                     
\vspace{-1.16cm}
\begin{center}
{$^{1}$Department of Mathematics, Faculty of Science,\\ Cairo
University, Giza, Egypt}\\
\end{center}

\begin{center}
{$^{2}$Department of Mathematics, Faculty of Science,\\ Benha
University, Benha,
Egypt}
\end{center}

\begin{center}
E-mails: nlyoussef@sci.cu.edu.eg, nlyoussef2003@yahoo.fr\\
{\hspace{1.8cm}}salah.ali@fsci.bu.edu.eg, salahelgendi@yahoo.com
\end{center}

\smallskip
\vspace{1cm} \maketitle
\smallskip
{\vspace{-1.1cm}} \noindent{\bf Abstract.}
The Klein-Grifone approach to global Finsler geometry is adopted. A global existence and uniqueness theorem for Chern connection is formulated and proved. The torsion and curvature tensors of Chern connection are derived. Some properties and the Bianchi identities for this connection are investigated. A concise comparison between Berwald, Cartan and Chern connections is presented.

\vspace{5pt}
\noindent{\bf Keywords: \/}\, Barthel connection, Berwald connection,  Cartan Connection, Chern connection, torsion tensors, curvature tensors, Bianchi identities.

\vspace{5pt}
\noindent{\bf  MSC 2010:\/} 53C60,
53B40, 58B20,  53C12.

\Section{Introduction}
The most well known and widely used approaches to global Finsler
geometry are the Klein-Grifone (KG-) approach (\cite{r21},
\cite{r22}, \cite{r27}) and the pullback (PB-) approach \linebreak (\cite{r58}, \cite{r61}, \cite{r74}, \cite{r44}).
Each of the two approaches has its own geometry which differs
significantly from the geometry of the other (in spite of the existence of some links between them).

In the PB-approach, the existence and uniqueness theorems for the four fundamental linear connections (Berwald, Cartan, Chern and Hashiguchi connections) on a Finsler manifold $(M,E)$  have been satisfactorily established (\cite{r92}, \cite{r94}). In the  KG-approach, Grifone \cite{r22} has investigated Cartan and Berwald connections. Szilasi and Vincze \cite{szilasi} have studied Chern
and Hasiguchi connections using the technique of lifting vector fields to the tangent bundle. To the best of our knowledge there is no proof, in the KG-approach, of the existence and uniqueness theorems for Chern and Hashiguchi connections.

In this paper, we formulate and prove an intrinsic existence and uniqueness theorem for Chern connection. We derive the torsion and curvature tensors of this connection. We also study the Bianchi identities and investigate the properties of  the curvature tensors of Chern connection.
Finally, an appendix concerning a global survey of Berwald, Cartan and Chern connections in the KG-approach is presented. It should finally be noted that
the Fr\"{o}licher-Nijenhuis formalism of vector forms \cite{r20} is used extensively in this work.


\Section{Notation and Preliminaries}
In this section, we give a brief account of the basic concepts
of the Klein-Grifone approach to global Finsler geometry (\cite{r21},\cite{r22}, \cite{r27}).
The following notations will be used throughout this paper:\\
 $M$: a real differentiable manifold of finite dimension $n$ and
class $C^{\infty}$,\\
  $\mathfrak{F}(M)$: the $\Real$-algebra of differentiable functions
on $M$,\\
 $\mathfrak{X}(M)$: the $\mathfrak{F}(M)$-module of vector fields
on $M$,\\
$\pi_{M}:TM\longrightarrow M$: the tangent bundle of $M$,\\
$\pi: \T M\longrightarrow M$: the subbundle of nonzero vectors
tangent to $M$,\\
$V(TM)$: the vertical subbundle of the bundle $T(TM)$,\\
$\pi^{-1}(TM):= \T M\undersym{\times}{M} TM$: the pullback bundle induced by $\pi$,\\
$ i_{X}$ : the interior derivative with respect to  $X
\in\mathfrak{X}(M)$,\\
$df$ : the exterior derivative  of $f$,\\
$ d_{L}:=[i_{L},d]$, $i_{L}$ being the interior derivative with
respect to a vector form $L$,\\
$\mathcal{L}_X$ : the Lie derivative  with respect to $X\in\mathfrak{{X}}(M)$.

\vspace{6pt}
We have the following short exact sequence of vector bundles
\vspace{-0.1cm}
$$0\longrightarrow
 \pi^{-1}(TM)\stackrel{\gamma}\longrightarrow T(TM)\stackrel{\rho}\longrightarrow
\pi^{-1}(TM)\longrightarrow 0 ,\vspace{-0.1cm}$$
 where the bundle morphisms $\rho$ and $\gamma$ are defined as usual. The vector $1$-form $J$ on
$TM$ defined by $J:=\gamma\circ\rho$ is called the natural almost-tangent structure of $T M$. The vertical vector field ${C}$
on $TM$ defined by ${C}:=\gamma\circ\overline{\eta}$, where $\overline{\eta}$
is the vector field on $\pi^{-1}(TM)$ given by $\overline{\eta}(u)=(u,u)$, is
called the   canonical or the Liouville  vector field.
One can show that the almost-tangent structure $J$ has the following properties:
\begin{equation}\label{J}
J^2=0,\quad [J,J]=0,\quad [C,J]=-J,\quad \text{Im}(J)=\text{Ker}(J)=V(TM).
\end{equation}

A scalar  $p$-form $\omega$ on $TM$ is semi-basic if $ i_{JX}\omega=0,\,\, \forall X\in \cppp$. A vector $\ell$-form $L$ on $TM$ is semi-basic if $ JL=0\,\, \text{and}\,\, i_{JX}L=0, \,\, \forall X\in \cppp$.\\
 A scalar  $p$-form $\omega$ on $TM$ is  homogeneous of degree $r$ if $\mathcal{L}_C\omega=r\omega$. A vector $\ell$-form $L$ on $TM$ is homogeneous of degree $r$ if $[C,L]=(r-1)L$.

 A semi-spray  on $M$ is a vector field $S$ on $TM$, $C^{\infty}$ on $\T M$, $C^{1}$ on $TM$, such that
$JS = C$. A homogeneous semi-spray $S$ of degree $2$
($[{C},S]= S $) is called a spray. If $S$ is a  semi-spray, then
\begin{equation}\label{jx}
J[JX,S]=JX,\,\, \forall X\in \cppp.
\end{equation}

A nonlinear connection on $M$ is a vector $1$-form $\Gamma$ on $TM$,
$C^{\infty}$ on $\T M$, $C^{0}$ on $TM$, such that
$J \Gamma=J, \,\, \Gamma J=-J$.
The vertical and horizontal   projectors $v$\,  and
$h$ associated with $\Gamma$ are defined respectively  by
   $v:=\frac{1}{2}
 (I-\Gamma),\, h:=\frac{1}{2} (I+\Gamma).$
Thus $\Gamma$ gives rise to the direct sum decomposition $T(T M)=
V(TM)\oplus H(TM)$, where $\,\,V(TM):= \text{Im} \, v=\text{Ker} \, h$ is the vertical bundle and   $H(TM):=\text{Im} \, h = \text{Ker}\,
v $ is the horizontal bundle induced by $\Gamma$. An element of $V(TM)$ (resp. $H(TM)$) will be denoted by $vX$  (resp. $hX$).  A nonlinear
connection $\Gamma$ is homogeneous if  $\Gamma$ is homogeneous of degree 1 as a vector form, i.e., $[{C},\Gamma]=0$.
For a homogeneous connection $\Gamma$, we have
 \begin{equation}\label{chx}
 [C,{h}X]={h}[C,X],\,\, \forall X\in \cppp.
 \end{equation}
The torsion $t$ of a nonlinear connection $\Gamma$ is the vector $2$-form  on $TM$ defined by $t:=\frac{1}{2} [J,\Gamma]$.
The curvature $\mathfrak{R}$ of $\Gamma$ is the vector $2$-form on $TM$ defined by $\mathfrak{R}:=-\frac{1}{2}[h,h]$. Given a nonlinear connection $\Gamma$,
an almost-complex structure $F$ $(F^2=-I)$ is  defined by $FJ=h$ and $Fh=-J$. This
$F$ defines an isomorphism of $T_z(TM)$  for all $z\in TM$.
\begin{defn}\label{Finsler}  A Finsler space is a pair $(M,E)$,
where $M$ is a
 differentiable manifold of dimension $n$ and $E$ is a map
 $E: TM \To \Real,$
called energy,  satisfying the axioms{\em:}
 \begin{description}
    \item[\em{\textbf{\em(a)}}] $E(u)>0 $ for all $u\in \T M$ and $E(0)=0$,
    \item[\em{\textbf{\em(b)}}] $E $ is  $C^{\infty}$ on  $\T M$, $C^{1}$ on $TM$,
    \item[\em{\textbf{\em(c)}}]$E$ is homogeneous of degree $2${\em:}
    $\mathcal{L}_{{C}} E=2E$,
    \item[\em{\textbf{\em(d)}}] The exterior  $2$-form
    $\Omega:=dd_{J}E$, called the fundamental form,  has a maximal rank.
     \end{description}
 \end{defn}
 From now on, we  will be placed on a Finsler space $(M,E)$.
\begin{thm}\label{spray} Let  $(M,E)$ be a  Finsler space. The vector field $S$ defined by $i_{S}\Omega =-dE$ is a spray, called the canonical spray.
 \end{thm}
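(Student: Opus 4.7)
The plan is to exploit the non-degeneracy of $\Omega$ twice: once to define $S$ unambiguously, and then to reduce the verifications of $JS=C$ and $[C,S]=S$ to equalities of $1$-forms obtained by contracting both sides against $\Omega$. Axiom (d) asserts that $\Omega=dd_{J}E$ has maximal rank on the $2n$-dimensional manifold $\T M$, so the musical map $X\mapsto i_{X}\Omega$ is a bundle isomorphism between $\cppp$ and $1$-forms on $\T M$. Consequently, the equation $i_{S}\Omega=-dE$ has a unique solution $S\in\cppp$ smooth on $\T M$; the required $C^{1}$ extension across the zero section is standard and follows from the homogeneity proved below.

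For the semi-spray identity $JS=C$ I would contract both candidates with $\Omega$. The key preliminary fact is $i_{J}\Omega=0$: since $[J,J]=0$, the Fr\"{o}licher--Nijenhuis calculus yields $d_{J}^{2}=0$, and the identity $d_{J}=i_{J}d-di_{J}$ applied to $d_{J}E=i_{J}dE$, combined with $i_{J}i_{J}dE=0$ (a consequence of $J^{2}=0$), collapses $0=d_{J}^{2}E$ to $i_{J}\Omega=0$. Using the graded commutation $i_{JS}=i_{S}i_{J}-i_{J}i_{S}$ valid for the vector $1$-form $J$ and a vector field $S$, one obtains
\[
i_{JS}\Omega=i_{S}i_{J}\Omega-i_{J}i_{S}\Omega=-i_{J}(-dE)=i_{J}dE=d_{J}E.
\]
On the other hand, $JC=0$ implies $i_{C}d_{J}E=(dE)(JC)=0$, so the Cartan formula gives $i_{C}\Omega=\Sc L_{C}d_{J}E$; invoking $[\Sc L_{C},i_{J}]=i_{[C,J]}=-i_{J}$ and $\Sc L_{C}E=2E$ I would compute $\Sc L_{C}d_{J}E=i_{J}\Sc L_{C}dE-i_{J}dE=2d_{J}E-d_{J}E=d_{J}E$. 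Hence $i_{JS}\Omega=i_{C}\Omega$, and non-degeneracy forces $JS=C$.

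For the homogeneity $[C,S]=S$ I would compute $i_{[C,S]}\Omega=\Sc L_{C}i_{S}\Omega-i_{S}\Sc L_{C}\Omega$ via the usual commutator identity. The first term equals $-\Sc L_{C}dE=-d(\Sc L_{C}E)=-2dE$. For the second, commuting $\Sc L_{C}$ past $d$ and using $\Sc L_{C}d_{J}E=d_{J}E$ established above yields $\Sc L_{C}\Omega=\Omega$, so $i_{S}\Sc L_{C}\Omega=-dE$. Thus $i_{[C,S]}\Omega=-2dE+dE=-dE=i_{S}\Omega$, and non-degeneracy again delivers $[C,S]=S$, so $S$ is a spray.

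The only genuine obstacle is orchestrating the Fr\"{o}licher--Nijenhuis identities in the middle step; once $i_{J}\Omega=0$ and the graded commutator formula for $i_{JS}$ are in hand, the rest is formal manipulation driven by the definition $\Omega=dd_{J}E$ and the homogeneity axiom $\Sc L_{C}E=2E$. The regularity questions at the zero section are routine but should be handled carefully, appealing to the degree-$2$ homogeneity of $S$ to pass from $C^{\infty}$ on $\T M$ to $C^{1}$ on $TM$.
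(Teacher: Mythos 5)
Your proof is correct. Note that the paper itself gives no proof of this theorem: it is quoted as standard background from the Klein--Grifone literature (Grifone's \emph{Structure presque-tangente et connexions} I and Klein--Voutier), and the argument you give --- deriving $i_J\Omega=0$ from $d_J^2=0$, then using the commutators $i_{JS}=i_Si_J-i_Ji_S$ and $[\mathcal{L}_C,i_J]=i_{[C,J]}=-i_J$ together with $\mathcal{L}_CE=2E$ to force $i_{JS}\Omega=i_C\Omega$ and $i_{[C,S]}\Omega=i_S\Omega$ --- is precisely the classical proof from those sources. All the individual steps check out, including the slightly delicate points ($i_Cd_JE=dE(JC)=0$ and $\mathcal{L}_C\Omega=\Omega$), and your remark that $C^1$-regularity at the zero section follows from degree-$2$ homogeneity is the standard way this is handled.
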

\begin{thm}\label{Barthel} On a Finsler space $(M,E)$, there exists a unique conservative \emph{(}$d_hE=~0$\emph{)} homogeneous nonlinear  connection $\Gamma$ with zero torsion.  It is given by $\Gamma = [J,S]$, where $S$ is the canonical spray.
Such a connection is called the canonical or Barthel connection  associated with $(M,E)$.
\end{thm}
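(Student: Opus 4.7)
The plan is to establish existence by taking $\Gamma := [J,S]$ with $S$ the canonical spray of Theorem~\ref{spray} and verifying the four required properties, then to prove uniqueness by studying the difference of any two admissible connections as a semi-basic vertical-valued vector $1$-form and invoking the nondegeneracy of $\Omega$.

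For existence I would proceed as follows. Using the pointwise formula $[J,S](X) = [JX,S] - J[X,S]$, the identities $J^2 = 0$ from \eqref{J} and $J[JX,S] = JX$ from \eqref{jx} give immediately $J\Gamma = J$ and $\Gamma J = -J$, so $\Gamma$ is a nonlinear connection. Homogeneity follows from $\mathcal{L}_C$ being a derivation of the Fr\"olicher--Nijenhuis bracket:
$$[C,[J,S]] = [[C,J],S] + [J,[C,S]] = -[J,S] + [J,S] = 0,$$
using $[C,J] = -J$ from \eqref{J} and $[C,S] = S$ (spray condition). The graded Jacobi identity on factors of degrees $1,1,0$, combined with $[J,J] = 0$, yields $2[J,[J,S]] = [[J,J],S] = 0$, so $t = \tfrac{1}{2}[J,\Gamma] = 0$. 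The conservativeness $d_hE = 0$ is the substantive step: expanding $i_S\Omega = -dE$ via the standard formula for the exterior derivative of $d_JE$, together with $JS = C$ and $\mathcal{L}_CE = 2E$, produces the identity
$$S(JX(E)) = X(E) + J[S,X](E), \qquad X \in \cppp.$$
Coupled with $SE = -\Omega(S,S) = 0$ (antisymmetry), substituting into $\Gamma X(E) = [JX,S](E) - J[X,S](E)$ collapses everything to $\Gamma X(E) = -X(E)$, which is precisely $d_hE = 0$ since $h = \tfrac{1}{2}(I+\Gamma)$.

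For uniqueness, let $\Gamma'$ be a second admissible connection and set $\phi := \Gamma' - \Gamma$. From $J\Gamma = J = J\Gamma'$ and $\Gamma J = -J = \Gamma'J$ we get $J\phi = 0$ and $\phi J = 0$, so $\phi$ is a semi-basic vertical-valued vector $1$-form inheriting $[J,\phi] = 0$, $[C,\phi] = 0$ and $d_\phi E = 0$. A short direct calculation provides the key identity $d_\phi E(X) = \Omega(\phi X,S)$. Writing locally $\phi = \phi^i_j\,\paa_i \otimes dx^j$, the condition $[J,\phi] = 0$ forces the symmetry $\paa_k\phi^i_l = \paa_l\phi^i_k$, $[C,\phi] = 0$ forces $\phi^i_j$ to be homogeneous of degree $1$ in the fibre coordinates, and the conservativeness reduces to $\phi^i_j\,\paa_iE = 0$; a standard symmetric/antisymmetric tensor argument together with the nondegeneracy of the Finsler metric $g_{ij} = \paa_i\paa_jE$ then forces $\phi = 0$, giving $\Gamma' = \Gamma$.

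The main obstacle, in my view, is the conservativeness verification and its dual in the uniqueness argument. Both require coordinating the spray equation $i_S\Omega = -dE$, the canonical identities among $J$, $C$, $S$, and the Fr\"olicher--Nijenhuis calculus, while keeping careful track of which summands are semi-basic, horizontal, or vertical. By contrast, the connection axioms, homogeneity, and vanishing of the torsion are clean consequences of the algebra of vector forms and require no input from the Finsler energy itself.
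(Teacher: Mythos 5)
A preliminary remark: the paper itself gives no proof of Theorem~\ref{Barthel} --- it is quoted in the preliminaries as Grifone's classical result (\cite{r21}, \cite{r22}) --- so there is no in-paper argument to compare against; your proposal has to stand on its own. The existence half does: the verification of $J\Gamma=J$, $\Gamma J=-J$ from $J^2=0$ and \eqref{jx}, the homogeneity and torsion computations via the graded Jacobi identity, and the conservativeness computation (the identity $S(JX(E))=X(E)+J[S,X](E)$ obtained by expanding $\Omega(S,X)$ with $\mathcal{L}_CE=2E$, combined with $SE=-\Omega(S,S)=0$, giving $\Gamma X\cdot E=-X\cdot E$) are all correct and complete.

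The one soft spot is the final step of uniqueness. From your three coordinate conditions --- $\paa_k\phi^i_l=\paa_l\phi^i_k$, degree-$1$ homogeneity of $\phi^i_j$, and $\phi^i_j\,\paa_iE=0$ --- the ``symmetric/antisymmetric tensor argument'' you invoke does not by itself close the proof: differentiating the conservative law vertically only shows that $A_{jk}:=\phi^i_jg_{ik}$ is \emph{symmetric} with $A_{jk}y^k=0$, and a symmetric tensor annihilated by $y^k$ need not vanish (the angular metric is a counterexample to that inference). What actually finishes the argument is a potential/Euler argument: the symmetry plus homogeneity give $\phi^i_j=\paa_j\psi^i$ with $\psi^i=\tfrac12 y^k\phi^i_k$ homogeneous of degree $2$; then $f:=\psi^i\paa_iE$ satisfies $\paa_jf=g_{ij}\psi^i$ by the conservative law, while Euler's relation gives $3f=y^j\paa_jf=\psi^i\paa_iE=f$, so $f=0$, hence $g_{ij}\psi^i=0$ and nondegeneracy of $g_{ij}=\paa_i\paa_jE$ forces $\psi=0$, i.e.\ $\phi=0$. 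Alternatively --- and more in the intrinsic spirit of the paper --- uniqueness follows without coordinates: for any homogeneous torsion-free $\Gamma'$ one has $\Gamma'=[J,S']$ with $S'=h'S_0$ its associated spray, and $d_{h'}E=0$ together with $S'E=0$ yields $i_{S'}\Omega=-dE$, so $S'=S$ by Theorem~\ref{spray} and $\Gamma'=[J,S]$. Either repair makes your proof complete.
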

\begin{thm}  For a Finsler space  $(M,E)$,
there exists a unique linear connection  \, $\overcirc{D}$ on $TM$  satisfying the following properties:
\begin{description}
                  \item[(a)]$\overcirc{D}J=0$,\hspace{5.4cm}\em{\textbf{(b)}}\, $\overcirc{D}C=v$,
                  \item[(c)] $\overcirc{D}\Gamma=0\,\,
                  (\Longleftrightarrow\overcirc{D}h=\overcirc{D}v=0  )$, \hspace{1.7cm}{\textbf{(d)}}\, $\overcirc{D}_{JX}JY=J[JX,Y]$,
                  \item[(e)]$\overcirc{T}(JX,Y)=0$,
 \end{description}
 $h$ and $v$ being the horizontal and vertical projectors of $\Gamma=[J,S]$ and \, $\overcirc{T}$ is the (classicl) torsion of \, $\overcirc{D}$.
 This connection is called the Berwald connection.
\end{thm}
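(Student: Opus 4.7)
The plan is to derive the connection $\overcirc{D}$ explicitly from axioms (a)--(e), yielding uniqueness, and then to verify that the resulting formula satisfies all five properties, yielding existence. As a preliminary observation, I would first note that axioms (a) and (c) together force $\overcirc{D}F=0$ for the almost-complex structure $F$ (with $FJ=h$, $Fh=-J$), as can be checked by evaluating $(\overcirc{D}_XF)$ on $JY$ and on $hY$. Combined with the fact that $J$ restricts to a bundle isomorphism $J|_H:H(TM)\to V(TM)$ whose inverse is $F|_V$, this will let me move freely between $J$, $F$, $h$ and $v$ in the calculations below.

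For uniqueness, I would decompose $X=hX+vX$ and $Y=hY+vY$ and treat the four pieces of $\overcirc{D}_XY$ in turn. The vertical--vertical piece is $\overcirc{D}_{vX}(vY)=J[vX,FvY]$, straight from (d) after writing $vY=J(FvY)$. For the vertical--horizontal piece, (a) and (d) give $J\overcirc{D}_{vX}(hY)=\overcirc{D}_{vX}(JY)=J[vX,Y]$; horizontality of $\overcirc{D}_{vX}(hY)$ from (c), combined with the injectivity of $J|_H$, then forces $\overcirc{D}_{vX}(hY)=h[vX,hY]$. The horizontal--vertical piece follows from (e): $\overcirc{D}_{hX}(vY)=\overcirc{D}_{vY}(hX)-[vY,hX]=v[hX,vY]$. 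Finally, for the horizontal--horizontal piece, (a) reduces the question to $\overcirc{D}_{hX}(JY)$, which by the horizontal--vertical case equals $v[hX,JY]$; applying $F|_V$ then yields $\overcirc{D}_{hX}(hY)=Fv[hX,JY]$.

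For existence, I would define $\overcirc{D}$ by extending these four formulas bilinearly. Tensoriality in $X$ and the Leibniz rule in $Y$ follow from the tensoriality of $h$, $v$, $J$, $F$ and standard bracket identities. Axioms (a) and (c) are built into the formulas by construction; axiom (d) reduces to $J[JX,vY]=0$, which holds because the vertical distribution is integrable; and axiom (e), after expansion and cancellation, collapses to $[JX,JZ]=J[JX,Z]+J[X,JZ]$, i.e.\ the vanishing of $[J,J]$ recorded in \eqref{J}.

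I expect the main obstacle to be axiom (b), $\overcirc{D}C=v$, since it is the only axiom linking the connection to the canonical spray (through $C=JS$). The horizontal part $\overcirc{D}_{hX}C=v[hX,C]$ vanishes because the homogeneity of $\Gamma$ gives $[C,hX]=h[C,X]$ by \eqref{chx}, so $[hX,C]$ is horizontal. For the vertical part, I would observe that $\Gamma(S)=[JS,S]-J[S,S]=[C,S]=S$, whence $hS=S$ and $FC=F(JS)=hS=S$; therefore $\overcirc{D}_{vX}C=J[vX,S]$. Writing $vX=J(FvX)$ and invoking \eqref{jx} with $W=FvX$ produces $J[vX,S]=J[J(FvX),S]=J(FvX)=vX$. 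Summing, $\overcirc{D}_XC=vX$, completing the verification.
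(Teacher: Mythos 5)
Your argument is correct, but note that the paper itself offers no proof of this theorem: it is quoted as a known result of Grifone \cite{r22}, and the paper only records its determination in (\ref{berwaldconn.}) before proving the analogous statement for the Chern connection (Theorem \ref{chernconnc.}). Measured against that template, your proof follows the same uniqueness-then-existence scheme, and the four formulas you extract, $\overcirc{D}_{vX}vY=J[vX,FvY]$, $\overcirc{D}_{vX}hY=h[vX,hY]$, $\overcirc{D}_{hX}vY=v[hX,vY]$, $\overcirc{D}_{hX}hY=Fv[hX,JY]$, are exactly the content of (\ref{berwaldconn.}) once one uses $\overcirc{D}F=0$ (your preliminary observation is the unconditional version of Lemma \ref{df=0}). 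The main structural difference from the Chern case is that no Koszul-type argument with the metric is needed: the horizontal--vertical piece comes directly from the torsion axiom (e), and everything else is bracket algebra. All the individual steps check out: the reduction of (d) to $J[JX,vY]=0$ via integrability of the vertical distribution, the collapse of (e) to $[J,J]=0$ from (\ref{J}), the use of homogeneity (\ref{chx}) to kill $v[hX,C]$, and the identification $FC=hS=S$ (from the spray property $[C,S]=S$) followed by (\ref{jx}) to get $\overcirc{D}_{vX}C=vX$; the last of these is precisely the computation the paper performs when verifying $\overast{D}C=v$ for the Chern connection. The only places where you should supply a line or two more detail in a final write-up are the tensoriality/Leibniz verification for the defined operator (the non-tensorial bracket terms cancel because $J(vX)=0$, $Jh=J$, etc.) and the explicit expansion behind the claim that (e) "collapses" to the vanishing of $[J,J]$, but both go through.
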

Berwald connection is completely  determined  by:
\begin{equation}\label{berwaldconn.}
     \overcirc{D}_{JX}JY=J[JX,Y],\quad
\overcirc{D}_{hX}JY=v[hX,JY],\quad
  \overcirc{D}F=0.
\end{equation}
The (h)h-torsion  \, $\overcirc{T}(hX,hY)$ of Berwald connection is given by \,
$\overcirc{T}(hX,hY)=\mathfrak{R}(X,Y).$

\vspace{7pt}

A metric $g$ can be defined on $TM$ by
\begin{equation}\label{metricg}
 g(X,Y)=\Omega(X,FY), \,\,\forall X,Y\in \mathfrak{X}(TM),
\end{equation}
where $F$ is the almost-complex structure associated with Barthel connection.
\begin{thm} For a Finsler space $(M,E)$, there exists a unique linear connection  ${D}$ on $TM$ satisfying the following properties:
\begin{description}
                  \item[(a)]${D}J=0$,\hspace{4.4cm} \em{\textbf{(b)}} ${D}C=v$,
                  \item[(c)] ${D}\Gamma=0$,\hspace{4.6cm}\textbf{(d)} ${D}g=0$,
                  \item[(e)] ${T}(JX,JY)=0$,\hspace{3.25cm}\textbf{(f)} $JT(hX,hY)=0$.
 \end{description}
 This connection  is called the Cartan  connection.
 \end{thm}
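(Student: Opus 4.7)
The proof will follow the classical \emph{uniqueness-then-existence} pattern, using a Koszul-type argument adapted to the horizontal-vertical splitting on $TM$. For uniqueness, I first observe that axioms (a) and (c), combined with the relations $FJ = h$ and $Fh = -J$, force $D$ to preserve $J$, $h$, $v$, and hence also $F$ (from $D(FJ) = Dh = 0$ one gets $(DF)J = 0$, and similarly $(DF)h = 0$). Thus $D$ carries $V(TM)$ to $V(TM)$ and $H(TM)$ to $H(TM)$, and the mixed components satisfy $D_{JX}hY = F(D_{JX}JY)$ and $D_{hX}JY = -F(D_{hX}hY)$, so it suffices to determine $D_{JX}JY$ and $D_{hX}hY$. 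Next I would check that the metric $g$ is block-diagonal, $g(JX, hY) = 0$; this follows from $\Omega(JX, JY) = 0$ (an immediate consequence of $d^2 = 0$ combined with $J^2 = 0$) together with the symmetry of $g$. With block-diagonality in hand, the standard Koszul manipulation applied cyclically to three vertical vectors, using (d) and (e), yields
\[
\begin{aligned}
2g(D_{JX}JY, JZ) &= (JX)\, g(JY, JZ) + (JY)\, g(JX, JZ) - (JZ)\, g(JX, JY) \\
 &\quad + g([JX, JY], JZ) - g([JX, JZ], JY) - g([JY, JZ], JX),
\end{aligned}
\]
which pins $D_{JX}JY$ down since $g$ is non-degenerate on $V(TM)$ ($\Omega$ having maximal rank). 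An analogous computation on three horizontal vectors, using (d) and (f)---the latter, since $J|_{H(TM)}$ is injective, forces the horizontal part of $T(hX, hY)$ to vanish so that the torsion corrections drop out of the Koszul expansion---produces the parallel formula for $D_{hX}hY$, completing uniqueness.

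\textbf{Existence.} I would then define $D$ by the two Koszul formulas, extending through the $F$-relations and $\mathfrak{F}(TM)$-linearity to all of $\mathfrak{X}(TM)$. Axioms (a) and (c) are built into the construction; (d) is recovered by reversing the Koszul derivation; (e) and (f) follow by computing the torsion of the resulting connection directly and observing that the antisymmetric terms cancel. It remains to verify (b), which is not used in the uniqueness derivation: specializing the Koszul formula for $D_{JX}JY$ to $Y = S$, the canonical spray (so that $JY = JS = C$), and invoking the homogeneity $\mathcal{L}_C E = 2E$ together with the conservative property $d_h E = 0$ of the Barthel connection, one recovers $D_{JX}C = JX$; the relation $D_{hX}C = 0$ is then read off from the horizontal Koszul formula by a similar specialization.

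\textbf{Main obstacle.} The principal technical difficulty is expected to be the self-consistency check: confirming that the single connection produced from the two separate Koszul formulas, once glued together by the $F$-relations, really satisfies all six axioms simultaneously, and in particular establishing (b) from the Koszul machinery rather than imposing it by hand. This will rely on a delicate interplay between the energy $E$, the canonical spray, and the Liouville field. A secondary, but non-trivial, issue is verifying the $\mathfrak{F}(TM)$-linearity of the Koszul right-hand sides in each slot, so that $D$ qualifies as a bona fide linear connection rather than a bilinear operation defined only on a generating family of vector fields on $TM$.
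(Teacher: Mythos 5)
The paper itself offers no proof of this statement: it is quoted as a preliminary (from Grifone's work), and only the determining formulas (\ref{cartanconn.}), namely $D_{JX}JY=\overcirc{D}_{JX}JY+\mathcal{C}(X,Y)$, $D_{hX}JY=\overcirc{D}_{hX}JY+\mathcal{C}'(X,Y)$, $DF=0$, are recorded. Your plan is nevertheless sound, and it is essentially the same Koszul-type argument that the paper does carry out in full for its main result, Theorem \ref{chernconnc.} on the Chern connection: reduce everything to two covariant derivatives using $DJ=D\Gamma=0$ (hence $DF=0$, which is the paper's Lemma \ref{df=0}), exploit the block-diagonality $g(hX,JY)=0$ and the non-degeneracy of $g$ on each block, form the cyclic sum of the metricity identities, and eliminate the symmetric part by the torsion axioms. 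The one real difference is organizational: you run the Koszul identity on three horizontal arguments to determine $D_{hX}hY$, whereas the paper's version (for Chern) differentiates $g(JY,JZ)$ along $hX$ and pins down $D_{hX}JY$ directly; the two are interchangeable through $DF=0$ and the identity $g(hY,hZ)=g(JY,JZ)$ used in the paper's existence proof. Your reading of axiom (f) is correct: $J$ is injective on $H(TM)$, so $JT(hX,hY)=0$ forces $D_{hX}hY-D_{hY}hX=h[hX,hY]$, which is exactly the symmetrization the Koszul sum needs (the counterpart of the paper's equation (\ref{jt})). You have also correctly isolated the genuine residual work: axiom (b) plays no role in uniqueness and must be verified for the constructed connection; the ingredients are $C=JS$, $J[JX,S]=JX$ from (\ref{jx}), $\mathcal{C}(X,S)=\mathcal{C}'(X,S)=0$ from (\ref{c(s)}), and the homogeneity identity (\ref{chx}), precisely as in the corresponding step of the paper's Chern proof. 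I see no gap in the outline beyond the computations you have already flagged.
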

Cartan  connection is completely determined by:
\begin{equation}\label{cartanconn.}
    D_{JX}JY=\overcirc{D}_{JX}JY+\mathcal{C}(X,Y),\quad
  D_{hX}JY=\overcirc{D}_{hX}JY+\mathcal{C}'(X,Y),\quad
  {D}F=0,
\end{equation}
where $\mathcal{C}$ and $\mathcal{C}'$ are the vector 2-forms on $TM$ defined respectively~by
{\vspace{-10pt}}$$\Omega(\mathcal{C}(X,Y),Z)=\frac{1}{2}(\mathcal{L}_{JX}(J^\ast g))(Y,Z),\quad
\Omega(\mathcal{C}'(X,Y),Z)=\frac{1}{2}(\mathcal{L}_{hX}g)(JY,JZ),$$
The tensors  ${\mathcal{C}}$ and $\mathcal{C}'$   are symmetric, semi-basics   and
  \begin{equation}\label{c(s)}
  {\mathcal{C}}(X,S)=\mathcal{C}'(X,S)=0.
  \end{equation}
The $h$-torsion,  $hv$-torsion and $v$-torsion of Cartan connection are  given respectively by:
\begin{equation*}\label{cart.tors.}
{T}(hX,hY)=\mathfrak{R}(X,Y),\quad T(hX,JY)=(\mathcal{C}'-F\mathcal{C})(X,Y),\quad T(JX,JY)=0.
\end{equation*}

Let\, $\overcirc R$ and\, $\overcirc P$ be the h- and hv-curvature tensors of Berwald connection and let $R$, $P$ and $Q$ be the h-, hv- and v-curvature tensors of Cartan connection.
\begin{lem}\label{car.curv.}
For Cartan connection,  the following properties hold:
\begin{description}
  \item[(a)]$R(X,Y)S=\mathfrak{R}(X,Y)$.\hspace{1.2cm}
  \textbf{\emph{(b)}} $P(X,Y)S=\mathcal{C}'(X,Y)$.
  \item[(c)]$P(S,X)Y=P(X,S)Y=0$.
\end{description}
\end{lem}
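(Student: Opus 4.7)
The proof is a direct calculation from the curvature of the Cartan linear connection, $K(X,Y)Z := D_XD_YZ - D_YD_XZ - D_{[X,Y]}Z$, using the defining properties $DJ=0$, $DC=v$, $DF=0$, $D\Gamma=0$, together with the mixed formulas (\ref{cartanconn.}): $D_{hX}(JY)=v[hX,JY]+\mathcal{C}'(X,Y)$ and $D_{JX}(JY)=J[JX,Y]+\mathcal{C}(X,Y)$. In the natural convention the Cartan curvature tensors act through the vertical avatar $JZ$ of their third argument, $R(X,Y)Z = K(hX,hY)JZ$ and $P(X,Y)Z = K(hX,JY)JZ$, so that since $JS=C$ the three claims translate into computations involving $K(\cdot,\cdot)C$.

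For \textbf{(a)}, $D_{hX}C=v(hX)=0$ and $D_{hY}C=0$ because $vh=0$, so only the commutator term survives:
\[
R(X,Y)S = K(hX,hY)C = -D_{[hX,hY]}C = -v[hX,hY].
\]
The identity $v[hX,hY]=-\mathfrak{R}(X,Y)$ is obtained from $\mathfrak{R}=-\tfrac{1}{2}[h,h]$ by applying $v$ and using that the Barthel curvature is vertical-valued, so all the $h(\cdots)$ terms in $[h,h](X,Y)$ drop out. For \textbf{(b)}, $D_{JY}C=v(JY)=JY$ while $D_{hX}C=0$, (\ref{cartanconn.}) gives $D_{hX}(JY)=v[hX,JY]+\mathcal{C}'(X,Y)$, and $D_{[hX,JY]}C=v[hX,JY]$, hence
\[
P(X,Y)S = K(hX,JY)C = \bigl(v[hX,JY]+\mathcal{C}'(X,Y)\bigr)-0-v[hX,JY] = \mathcal{C}'(X,Y).
\]

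For \textbf{(c)} I expand $P(X,S)Y=K(hX,C)(JY)$ and $P(S,X)Y=K(S,JX)(JY)$ in the same fashion, but now $S$ interacts with the commutators, and additional ingredients are needed: $D_C(JY)=D_{JS}(JY)=J[C,Y]+\mathcal{C}(S,Y)=J[C,Y]$ from (\ref{cartanconn.}) combined with (\ref{c(s)}); $D_S(JY)=v[S,JY]+\mathcal{C}'(S,Y)=v[S,JY]$ again by (\ref{c(s)}); the identity $[hX,C]=-h[C,X]$ from (\ref{chx}), which makes $[hX,C]$ horizontal so that (\ref{cartanconn.}) applies to $D_{[hX,C]}(JY)$; and the splitting $[S,JX]=-hX+v[S,JX]$, which follows from (\ref{jx}) (giving $J[S,JX]=-JX$) together with the injectivity of $J$ on $H(TM)$ (forcing $h[S,JX]=-hX$). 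Substituting and collecting, the $\mathcal{C}$- and $\mathcal{C}'$-contributions cancel by repeated use of $\mathcal{C}(X,S)=\mathcal{C}'(X,S)=0$, and the remaining bracket terms cancel by the same pairing already seen in (a) and (b), so both $P(X,S)Y$ and $P(S,X)Y$ vanish. The main obstacle is precisely this part: the calculation is not conceptually hard but requires simultaneous bookkeeping of the commutator identities (\ref{jx}), (\ref{chx}), and $[hX,C]=-h[C,X]$, together with both vanishings in (\ref{c(s)}); ultimately it is the homogeneity of the Cartan connection, inherited from that of the canonical spray and of the energy $E$, that makes the cancellations go through.
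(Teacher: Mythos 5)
The paper itself states Lemma \ref{car.curv.} in the preliminaries without proof (it is quoted from the Klein--Grifone literature), so there is no in-paper argument to compare against; your proposal has to stand on its own. Parts \textbf{(a)} and \textbf{(b)} do: with the convention $R(X,Y)Z=K(hX,hY)JZ$, $P(X,Y)Z=K(hX,JY)JZ$ (the same one the paper uses for the Chern curvatures), the substitution $JS=C$ together with $DC=v$ kills exactly the terms you say it kills, and your derivation of $\mathfrak{R}(X,Y)=-v[hX,hY]$ from $\mathfrak{R}=-\frac{1}{2}[h,h]$ is sound.

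Part \textbf{(c)} is where there is a genuine gap. Your stated mechanism --- that ``the $\mathcal{C}$- and $\mathcal{C}'$-contributions cancel by repeated use of $\mathcal{C}(X,S)=\mathcal{C}'(X,S)=0$'' --- is not what actually happens, because in $K(hX,C)JY$ the third slot is a generic $JY$, not $C$. Carrying the expansion out (using $D_CV=[C,V]+V$ for vertical $V$, which follows from $D_{JS}JW=J[C,W]$ and $[C,J]=-J$, together with $[hX,C]=-h[C,X]$), the $\mathcal{C}'$-terms assemble into $\mathcal{C}'([C,X],Y)+\mathcal{C}'(X,[C,Y])-[C,\mathcal{C}'(X,Y)]-\mathcal{C}'(X,Y)=-[C,\mathcal{C}'](X,Y)-\mathcal{C}'(X,Y)$, which vanishes only because $\mathcal{C}'$ is homogeneous of degree $0$, i.e.\ $[C,\mathcal{C}']=-\mathcal{C}'$ --- a fact nowhere stated in the paper and not established in your proposal; the identity (\ref{c(s)}) plays no role in this cancellation. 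Likewise the remaining bracket terms cancel via the Jacobi identity applied to $[C,[hX,JY]]$ combined with $[C,hX]=h[C,X]$ and $[C,JY]=J[C,Y]-JY$, not ``by the same pairing already seen in (a) and (b)'' --- there the cancellation was trivial because $D_{\bullet}C$ is read off directly from $DC=v$. The case $P(S,X)Y=K(S,JX)JY$ is messier still: your splitting $[S,JX]=-hX+v[S,JX]$ is correct (and uses $hS=S$, which you should justify from $[C,S]=S$), but it injects terms such as $D_{v[S,JX]}JY=J[v[S,JX],Y]+\mathcal{C}(Fv[S,JX],Y)$ whose cancellation you neither display nor explain. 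You do name homogeneity as the underlying reason at the very end, which is the right instinct, but as written part \textbf{(c)} is an assertion rather than a proof: you need either to perform the bookkeeping explicitly or to prove and invoke the homogeneity relations $[C,\mathcal{C}]=-\mathcal{C}$ and $[C,\mathcal{C}']=-\mathcal{C}'$.
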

\vspace{-10pt}
\Section{Chern connection}
In this section, we prove the existence and uniqueness theorem of a remarkable connection: the Chern connection. We also give an   explicit expressions for Chern connection.

\vspace{5pt}
We begin with some definitions quoted from \cite{r22}.
\vspace{-5pt}
\begin{defn}
A linear connection $\nabla$ on $TM$ is said to be regular if $\nabla J=0$ and the map $\varphi:V(TM)\longrightarrow V(TM)$:
  $X\longmapsto \nabla_XC$  is an isomorphism of $V(TM)$.
\end{defn}
 For a regular connection $\nabla$ on $TM$ there is associated a nonlinear   connection $\Gamma$  on $M$ defied by $\Gamma={I-2\varphi^{-1}}\circ
 \nabla {C}$;\, $\Gamma$ is said to be induced by $\nabla$.

\begin{defn}
A regular  connection $\nabla$ on $TM$    is
 said to be reducible if ${\nabla\Gamma}=0$, where $\Gamma$ is the nonlinear connection induced  by $\nabla$.
\end{defn}

\begin{defn}
A linear  connection $\nabla$ on $TM$ is said to be almost-projectable  if $\nabla{J}=0$ and $\nabla_{JX}C=JX$, for all $X\in T(TM)$. (an almost-projectable connection is necessairly regular).

If we replace the last axiom  by the axiom  $\nabla_{JX}JY=J[JX,Y]$,  the connection $\nabla$ is called normal almost-projectable.

 The connection $\Gamma$ induced on $M$ by an almost-projectable (rep. normal almost-projectable) connection $\nabla$ on $TM$  will be called a projection (rep. normal projection) of $\nabla$. We also say that $\nabla$ projects (resp. projects normally) on $\Gamma$.
\end{defn}

\begin{defn}
Let $\Gamma$ be a connection on    $M$.  The lift of $\Gamma$ is a reducible  connection $\nabla$ on $TM$ which projects
on $\Gamma$. The lift of $\Gamma$ is said to be normal if $\nabla$ is normal.
\end{defn}

\begin{lem}\label{df=0}
For a reducible connection $\nabla$, we have ${\nabla F}=0$, where ${F}$ is the almost-complex structure associated with
 the connection $\Gamma$ induced by $\nabla$.
\end{lem}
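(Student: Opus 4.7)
The plan is to apply $\nabla$ to the two defining identities $FJ=h$ and $Fh=-J$ of the almost-complex structure $F$, and to read off from the resulting equalities that $\nabla F$ annihilates every vertical and every horizontal vector. Since $T(TM)=V(TM)\oplus H(TM)$, this forces $\nabla F=0$.

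I first record the relevant vanishings. Regularity of $\nabla$ gives $\nabla J=0$, and reducibility gives $\nabla\Gamma=0$; combined with $\nabla I=0$ and the identities $h=\tfrac12(I+\Gamma)$, $v=\tfrac12(I-\Gamma)$, this implies $\nabla h=\nabla v=0$. Next, viewing $F$, $J$, $h$ as vector $1$-forms (equivalently, $(1,1)$-tensor fields) on $TM$, the Leibniz rule
\[
\nabla_Y(L\circ K)=(\nabla_Y L)\circ K+L\circ(\nabla_Y K),
\]
which follows by unwinding $(\nabla_Y T)(X)=\nabla_Y(T(X))-T(\nabla_Y X)$, is available. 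Applying it to $FJ=h$ together with $\nabla J=\nabla h=0$ gives $(\nabla_Y F)\circ J=0$, and applying it to $Fh=-J$ together with $\nabla h=\nabla J=0$ gives $(\nabla_Y F)\circ h=0$, for every $Y\in\mathfrak X(TM)$.

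To conclude, by \eqref{J} one has $\operatorname{Im}(J)=V(TM)$, while $\operatorname{Im}(h)=H(TM)$ by definition. Writing an arbitrary $Z\in\mathfrak X(TM)$ as $Z=vZ+hZ$ decomposes it into a vertical vector (lying in $\operatorname{Im}(J)$) and a horizontal vector (lying in $\operatorname{Im}(h)$), on each of which $\nabla_Y F$ vanishes by the previous step. Hence $\nabla_Y F=0$ for every $Y$, i.e.\ $\nabla F=0$. The only delicate step in all of this is the clean bookkeeping of the Leibniz rule for compositions of vector $1$-forms; no deeper obstruction is anticipated.
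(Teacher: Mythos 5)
Your proof is correct. The paper itself states Lemma \ref{df=0} without proof (it is quoted from Grifone \cite{r22}), and your argument --- differentiating $FJ=h$ and $Fh=-J$ via the Leibniz rule for composed vector $1$-forms, using $\nabla J=0$ (regularity) and $\nabla h=\nabla v=0$ (reducibility), and then invoking $\operatorname{Im}(J)=V(TM)$ together with the splitting $T(TM)=V(TM)\oplus H(TM)$ --- is exactly the standard computation that establishes it.
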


Now, we are in a position to announce our fundamental result.
\vspace{-8pt}
\begin{thm}\label{chernconnc.} For a Finsler manifold $(M,E)$ there exists a unique normal
lift  \, $\overast{D}$ of Barthel connection $\Gamma=[J,S]$ such that:
\begin{description}
  \item[(a)] $\overast{D}$ is horizontally metric:\, $\overast{D}_{hX}g=0,\,\,\forall X\in \cppp$.
  \item[(b)] The classical torsion \,\,$\overast{T}$ has the property that: $J\,\,\overast{T}(hX,hY)=0, \, \forall X,Y\in \cppp$.
\end{description}
This connection  is called Chern connection.
\end{thm}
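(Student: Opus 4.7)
The plan is to reduce the problem to finding a single vertical-valued vector $2$-form $U$ and then to use axioms (a) and (b) as a Koszul-type system that determines $U$ uniquely.

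First, I would extract from the normal-lift assumption the identities $\overast{D}J=0$, $\overast{D}\Gamma=0$ and $\overast{D}_{JX}JY=J[JX,Y]$; by Lemma~\ref{df=0} also $\overast{D}F=0$, hence $\overast{D}h=\overast{D}v=0$. Consequently $\overast{D}_{JX}hY=F\overast{D}_{JX}JY=h[JX,Y]$ is already fixed, and the only remaining freedom sits in $\overast{D}_{hX}JY$, which I write as
$$\overast{D}_{hX}JY=v[hX,JY]+U(X,Y),$$
for some semi-basic vertical-valued vector $2$-form $U$; then $\overast{D}_{hX}hY=F\overast{D}_{hX}JY$ follows from $\overast{D}F=0$.

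Next I would translate the two axioms. Using $JF\big|_{V(TM)}=\mathrm{id}$, a direct computation gives
$$J\,\overast{T}(hX,hY)=U(X,Y)-U(Y,X)+v[hX,JY]-v[hY,JX]-J[hX,hY],$$
so (b) is equivalent to the antisymmetric relation $U(X,Y)-U(Y,X)=A(X,Y)$, where
$$A(X,Y):=J[hX,hY]-v[hX,JY]+v[hY,JX].$$
Since $\overast{D}F=0$ and $g(F\cdot,F\cdot)=g(\cdot,\cdot)$, axiom (a) reduces to its vertical-vertical component, giving
$$g(U(X,Y),JZ)+g(JY,U(X,Z))=B(X;Y,Z),$$
where
$$B(X;Y,Z):=hX\cdot g(JY,JZ)-g(v[hX,JY],JZ)-g(JY,v[hX,JZ]),$$
manifestly symmetric in $(Y,Z)$.

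Finally, applying the Koszul-type cyclic combination $B(X;Y,Z)+B(Y;X,Z)-B(Z;X,Y)$ and using the antisymmetric identity to swap the first argument of $U$ wherever needed, one obtains
$$2\,g(U(X,Y),JZ)=B(X;Y,Z)+B(Y;X,Z)-B(Z;X,Y)+g(A(X,Y),JZ)-g(A(X,Z),JY)-g(A(Y,Z),JX).$$
Because $\Omega$ has maximal rank, $g$ is nondegenerate on $V(TM)\times V(TM)$, so this formula determines $U$ uniquely, which settles uniqueness. For existence I would take $U=\mathcal{C}'$: then (a) is just a rewriting of Cartan's identity $Dg=0$, and since the torsion-free identity $[J,\Gamma]=0$ for the Barthel connection forces $A\equiv 0$ while $\mathcal{C}'$ is symmetric, (b) also holds; the remaining conditions (tensoriality, the normal-lift axioms, and $\overast{D}_{hX}hY=F\overast{D}_{hX}JY$) are then routine to verify. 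The main obstacle is the careful Fr\"olicher--Nijenhuis bookkeeping in translating (a) and (b) into the form above and in confirming that the $U$ produced by the Koszul formula is indeed semi-basic and tensorial in both arguments.
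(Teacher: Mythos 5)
Your proposal is correct and follows essentially the same route as the paper: both reduce everything to determining $\overast{D}_{hX}JY$ via the normal-lift axioms and $\overast{D}F=0$ (Lemma \ref{df=0}), use condition \textbf{(b)} to antisymmetrize and condition \textbf{(a)} in a Koszul-type cyclic combination to pin down the answer through the nondegeneracy of $g$ on verticals, and then establish existence by identifying the horizontal part with $v[hX,JY]+\mathcal{C}'(X,Y)$. Your parametrization by the difference tensor $U$ (with $A\equiv 0$ from the vanishing torsion of the Barthel connection) is only a cosmetic repackaging of the paper's direct manipulation, though note that the ``routine'' verifications you defer (that the constructed $\overast{D}$ really satisfies $\overast{D}J=0$, $\overast{D}C=v$, $\overast{D}\Gamma=0$) occupy a substantial part of the paper's existence argument.
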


\begin{proof} Firstly,  we prove the \textbf{uniqueness}.
Since \,   $\overast{D}$ is a normal lift of  Barthel connection $\Gamma=[J,S]$, then
\begin{equation}\label{naplajj}
\overast{D}_{JX}JY=J[JX,Y].
\end{equation}
 Also,  by Lemma \ref{df=0}, we have
 \begin{equation}\label{naplaf=0}
 \overast{D} F=0.
 \end{equation}
 Condition \textbf{(a)} implies that:
\begin{eqnarray}
\label{1} hX.g(JY,JZ)  &=&g(\,\overast{D}_{hX}JY,JZ)+g(JY,\,\overast{D}_{hX}JZ),  \\
\label{2}   hY.g(JZ,JX)  &=&g(\,\overast{D}_{hY}JZ,JX)+g(JZ,\,\overast{D}_{hY}JX), \\
\label{3}   hZ.g(JX,JY)  &=&g(\,\overast{D}_{hZ}JX,JY)+g(JX,\,\overast{D}_{hZ}JY).
\end{eqnarray}
By adding  (\ref{1}), (\ref{2}) and subtracting  (\ref{3}), we get
\begin{eqnarray}\label{4}
\nonumber hX.g(JY,JZ)&+&hY.g(JZ,JX) - hY.g(JZ,JX)= g(\,\overast{D}_{hX}JY+\,\overast{D}_{hY}JX,JZ)  \\
   && +g(JY,\,\overast{D}_{hX}JZ-\,\overast{D}_{hZ}JX)+g(\,\overast{D}_{hY}JZ-\,\overast{D}_{hZ}JY,JX).
\end{eqnarray}
Condition \textbf{(b)} together with \,$\overast{D}J=0$ imply:
\begin{equation}\label{jt}
\overast{D}_{hX}JY-\overast{D}_{hY}JX=J[hX,hY].
\end{equation}
From  (\ref{4}) and (\ref{jt}), we get
\begin{eqnarray}\label{gnabla}
\nonumber  g(2\,\overast{D}_{hX}JY,JZ)  &=&hX.g(JY,JZ)+hY.g(JZ,JX) - hY.g(JZ,JX)\\
&&{\hspace{-2cm}}+g(J[hX,hY],JZ)- g(J[hX,hZ],JY)-g(J[hY,hZ],JX)
\end{eqnarray}
Since $\Omega(X,Y)=g(X,JY)-g(JX,Y)$ and using $J{\mathcal{C}'}=0$, then $$\frac{1}{2}(\mathcal{L}_{hX}g)(JY,JZ)=\Omega(\mathcal{C}'(X,Y),Z)=g(\mathcal{C}'(X,Y),JZ),$$ which is totally symmetric. Now,
\begin{eqnarray*}
  g(2\mathcal{C}'(X,Y),JZ) &=& hX.g(JY,JZ)-g([hX,JY],JZ)-g(JY,[hX,JZ]),\\
  g(2\mathcal{C}'(Y,Z),JX) &=& hY.g(JZ,JX)-g([hY,JZ],JX)-g(JZ,[hY,JX]),\\
  -g(2\mathcal{C}'(Z,X),JY) &=&- hZ.g(JX,JY)+g([hZ,JX],JY)+g(JX,[hZ,JY]).
\end{eqnarray*}
By adding the above three equations, we get
\begin{eqnarray}\label{cdash}
  \nonumber  g(2\mathcal{C}'(X,Y),JZ)&=&hX.g(JY,JZ)+hY.g(JZ,JX)-hZ.g(JX,JY)\\
 \nonumber  && -g([hX,JY]+[hY,JX],JZ)+g([hZ,JX]-[hX,JZ],JY)\\&&
    +g([hZ,JY]-[hY,JZ],JX)
\end{eqnarray}
From  (\ref{gnabla}) and (\ref{cdash}), we have
\begin{eqnarray}\label{gnapla}
 \nonumber  g(2\,\overast{D}_{hX}JY,JZ)&=&g(2\mathcal{C}'(X,Y),JZ)+ g([hX,JY]+[hY,JX]+J[hX,hY],JZ)\\
 \nonumber &&- g([hZ,JX]-[hX,JZ]+J[hX,hZ],JY) \\
   &&-g([hZ,JY]-[hY,JZ]+J[hY,hZ],JX).
\end{eqnarray}
Since the Barthel connection is torsion-free (Theorem \ref{Barthel}), then $$0=t(X,Y)=v[JX,hY]+v[hX,JY]-J[hX,hY]$$ and so $J[hX,hY]=v[JX,hY]+v[hX,JY]$.
Hence, one can write:
\begin{eqnarray*}
   [hX,JY]+[hY,JX]+J[hX,hY]&=&2v[hX,hY]+h[hX,JY]+h[hY,JX],\\
\textcolor[rgb]{1.00,1.00,1.00}{(}[hZ,JX]-[hX,JZ]+J[hX,hZ]&=&h[JZ,JX]+h[hZ,JX],\\
\textcolor[rgb]{1.00,1.00,1.00}{(}[hZ,JY]-[hY,JZ]+J[hY,hZ] &=&h[hZ,JY]+h[JZ,hY].
\end{eqnarray*}
The above relations and the the fact that $g(hX,JY)=0$ enable us to write  (\ref{gnapla}) in the form
$$g(2\,\overast{D}_{hX}JY,JZ)=g(2\mathcal{C}'(X,Y)+2v[hX,JY],JZ).$$
Hence,
\begin{equation}\label{naplahj}
\overast{D}_{hX}JY=v[hX,JY]+\mathcal{C}'(X,Y).
\end{equation}

Therefore, \, $\overast{D}_XY$ is uniquely determined by
(\ref{naplajj}), (\ref{naplahj}) and (\ref{naplaf=0}).\\

To prove  the \textbf{existence} of \,$\overast{D}$, let us  define \,$\overast{D}$    by  the requirement that  (\ref{naplajj}), (\ref{naplahj}) and (\ref{naplaf=0}) hold.
Now, we have to prove that\,  $\overast{D}$ is a normal lift of $\Gamma=[J,S]$ (i.e.,    $\,\overast{D} J=0$, \, $\,\overast{D} C=v$,
\, $\,\overast{D} \Gamma=0$, \, $\,\overast{D}_{JX}JY=J[JX,Y]$) and conditions \textbf{(a)} and \textbf{(b)} are satisfied.

\vspace{5pt}
\noindent $\bullet$ $\,\overast{D} J=0$:   From (\ref{naplajj}),
(\ref{naplaf=0}) and
 (\ref{naplahj}), we have
\begin{eqnarray*}
   J\,\overast{D}_{hX}Y&=& J\,\overast{D}_{hX}hY+J\,\overast{D}_{hX}vY \\
   &=&JF\,\overast{D}_{hX}JY+J\,\overast{D}_{hX}JFY  \\
   &=&(JFv[hX,JY]+v \mathcal{C}'(X,Y))+(Jv[hX,vY]+J\mathcal{C}'(X,FY))\\
   &=&v[hX,JY]+ \mathcal{C}'(X,Y),\,\, \text{since $\mathcal{C}'$ is semi basic and $Jv=0$ }\\
   &=&\overast{D}_{hX}JY.
\end{eqnarray*}
Similarly, one can show that $J\,\overast{D}_{vX}Y=\,\overast{D}_{vX}JY$.

\vspace{5pt}
\noindent$\bullet$ $\,\overast{D} C=v$:    From
  (\ref{naplahj}), (\ref{c(s)})   and  (\ref{chx}), we get
$$  \,\overast{D}_{hX}C =\,\overast{D}_{hX}JS= v[hX,JS]+ \mathcal{C}'(X,S)
   =-v[C,hX]
   =-vh[C,X]
   =0.$$
On the other hand,   from (\ref{naplajj}) and  (\ref{jx}), we obtain
$  \,\overast{D}_{JX} C =\,\overast{D}_{JX}JS= J[JX,S]=JX.$

\vspace{5pt}

\noindent$\bullet$ $\,\overast{D} \Gamma=0$ or,  equivalently,  $\,\overast{D} h=0$:
\begin{eqnarray*}
   h\,\overast{D}_{hX}Y&=& h\,\overast{D}_{hX}hY+h\,\overast{D}_{hX}vY=h\,\overast{D}_{hX}hY, \,\, \text{since $\,\overast{D}_{hX}vY$ is vertical by (\ref{naplahj})} \\
   &=&h\,\overast{D}_{hX}FJY=hF\,\overast{D}_{hX}JY ,\,\, \text{by (\ref{naplaf=0})} \\
   &=& hFv[hX,JY]+hF \mathcal{C}'(X,Y)\\
   &=&Fv^2[hX,JY]+ Fv\mathcal{C}'(X,Y)\\
   &=&Fv[hX,JY]+ F\mathcal{C}'(X,Y)=F{D}_{hX}JY
    =\overast{D}_{hX}hY.
\end{eqnarray*}
Similarly, \, $\overast{D}_{JX}hY=h \, \overast{D}_{JX}Y$.

\vspace{5pt}

\noindent$\bullet$ $\overast{D}$ is h-metrical: As $g(JX,JY)=g(hX,hY)$ and $g(hX,JY)=~0$, it suffices to prove that
$\,(\overast{D}_{hX}g)(JY,JZ)=0$. By  (\ref{naplajj}),  (\ref{naplaf=0}) and (\ref{naplahj}), we have
\begin{eqnarray*}
 (\,\overast{D}_{hX}g)(JY,JZ)  &=& hX.g(JY,JZ)- g(\,\overast{D}_{hX}JY,JZ)-g(JY,\,\overast{D}_{hX}JZ) \\
   &=&  hX.g(JY,JZ)- g(v[hX,JY],JZ)-g(\mathcal{C}'(X,Y),JZ)\\
   &&-g(JY,v[hX,JZ])-g(JY,\mathcal{C}'(X,Z))\\
   &=&hX.g(JY,JZ)- g(v[hX,JY],JZ)-g(JY,v[hX,JZ])  \\
   &&-2\mathcal{C}'_b(X,Y,Z)=0.
\end{eqnarray*}

\vspace{3pt}

\noindent$\bullet$ $J\textbf{T}(hX,hY)=0$:  By   (\ref{naplahj}) and the symmetry of $\mathcal{C}'$ , we have
\begin{eqnarray*}
  J\,\overast{T}(hX,hY) &=&J\,\overast{D}_{hX}hY-J\,\overast{D}_{hY}hX-J[hX,hY]  \\
   &=& \,\overast{D}_{hX}JY-\,\overast{D}_{hY}JX-J[hX,hY] \\
   &=&v[hX,JY]+ \mathcal{C}'(X,Y)-v[hY,JX]- \mathcal{C}'(Y,X)-J[hX,hY]\\
   &=& t(X,Y)=0.
\end{eqnarray*}
This completes the proof.
\end{proof}

\begin{cor}\label{chern.nab.} The Chern connection $\,\overast{D}$ is completely determined by:
\begin{description}
  \item[(a)] $\,\overast{D}_{JX}JY=J[JX,Y]=\,\overcirc{D}_{JX}JY.$
  \item[(b)] $\,\overast{D}_{hX}JY=v[hX,JY]+\mathcal{C}'(X,Y)={D}_{hX}JY.$
  \item[(c)] $\,\overast{D} F=0.$
\end{description}
\end{cor}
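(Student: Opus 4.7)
The plan is to recognize that the content of this corollary is essentially already contained in the proof of Theorem \ref{chernconnc.}, combined with the formulas for the Berwald and Cartan connections recorded in (\ref{berwaldconn.}) and (\ref{cartanconn.}). The corollary packages together three items: (i) that the three listed identities hold for \,$\overast{D}$, (ii) that they coincide with the corresponding expressions for the Berwald and Cartan connections, and (iii) that they completely determine \,$\overast{D}$.

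For (i), I would simply remark that formulas \textbf{(a)}, \textbf{(b)}, \textbf{(c)} are precisely the identities (\ref{naplajj}), (\ref{naplahj}), and (\ref{naplaf=0}) already obtained in the uniqueness argument of Theorem \ref{chernconnc.}; the existence argument in that theorem was carried out by defining \,$\overast{D}$ via exactly these three relations. For (ii), the identification $J[JX,Y]=\,\overcirc{D}_{JX}JY$ is the first equation of (\ref{berwaldconn.}), while for the second identification one combines the second equation of (\ref{berwaldconn.}), namely $\overcirc{D}_{hX}JY=v[hX,JY]$, with the definition (\ref{cartanconn.}) of the Cartan connection to obtain $D_{hX}JY=\overcirc{D}_{hX}JY+\mathcal{C}'(X,Y)=v[hX,JY]+\mathcal{C}'(X,Y)$.

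For (iii), I would show that these three identities suffice to recover $\overast{D}_X Y$ for arbitrary $X,Y\in\mathfrak{X}(TM)$. Using the relations $FJ=h$, $Fh=-J$ together with $\overast{D}J=0$ and $\overast{D}F=0$ (both established in Theorem \ref{chernconnc.}, the second being condition \textbf{(c)} itself), one decomposes $Y=hY+vY=FJY+J(FvY)$ and brings $J$ and $F$ outside the covariant derivative, reducing $\overast{D}_X Y$ to a combination of terms of the form $\overast{D}_X JY'$. Decomposing $X=hX+vX$ with $vX=J(FvX)$ then reduces everything to expressions of the form $\overast{D}_{hX'}JY'$ and $\overast{D}_{JX'}JY'$, which are fixed by \textbf{(a)} and \textbf{(b)}.

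Since the substantive work was already carried out in Theorem \ref{chernconnc.}, there is no real obstacle here; the only point requiring any care is the uniqueness-of-determination argument in step (iii), and even that is a routine verification using $\overast{D}J=0$ and $\overast{D}F=0$. Thus the proof of the corollary is essentially bookkeeping.
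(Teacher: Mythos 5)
Your proposal is correct and follows essentially the same route as the paper: the corollary is an immediate repackaging of equations (\ref{naplajj}), (\ref{naplahj}) and (\ref{naplaf=0}) from the proof of Theorem \ref{chernconnc.}, with the identifications to the Berwald and Cartan connections read off from (\ref{berwaldconn.}) and (\ref{cartanconn.}). Your step (iii) merely makes explicit the determination argument that the paper leaves implicit in the sentence following (\ref{naplahj}).
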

\vspace{-10pt}
\Section{ Torsion and curvature tensors }

In this section, we study the torsion and curvature tensors of Chern connection.  We also derive  the Bianchi identities and obtain some properties of the curvature tensors. We start with the following lemma which will be useful for subsequent use.

\begin{lem}\label{chern.[]}For all $X,Y\in \cppp$, we have
\begin{description}
  \item[(a)] $[JX,JY]=J(\,\overast{D}_{JX}Y-\,\overast{D}_{JY}X).$
  \item[(b)] $[hX,JY]=J(\,\overast{D}_{hX})Y-h(\,\overast{D}_{JY}X)-{\mathcal{C}'}(X,Y).$
  \item[(c)] $[hX,hY]=h(\,\overast{D}_{hX}Y-\,\overast{D}_{hY}X)-\mathfrak{R}(X,Y).$
\end{description}
\end{lem}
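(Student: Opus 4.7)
The plan is to verify each identity (a)--(c) by splitting both sides along the horizontal--vertical decomposition $T(TM) = H(TM) \oplus V(TM)$ induced by Barthel's connection, and applying the explicit formulas for $\overast{D}$ from Corollary \ref{chern.nab.} together with the intrinsic properties $\overast{D}J = 0$, $\overast{D}h = 0$ (equivalent to $\overast{D}\Gamma = 0$), and $\overast{D}F = 0$ established in the proof of Theorem \ref{chernconnc.}.

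For part (a), I intend to use the integrability condition $[J,J]=0$. Since $J^2 = 0$, the Fr\"olicher--Nijenhuis identity reduces to $[JX,JY] = J[JX,Y] + J[X,JY]$. Corollary \ref{chern.nab.}(a) gives $J[JX,Y] = \overast{D}_{JX}JY$, which equals $J\overast{D}_{JX}Y$ by $\overast{D}J=0$; applying the same to the second summand produces (a) immediately.

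For part (b), I will isolate the vertical and horizontal components of $[hX,JY]$. The vertical part comes directly from Corollary \ref{chern.nab.}(b): $v[hX,JY] = \overast{D}_{hX}JY - \mathcal{C}'(X,Y) = J\overast{D}_{hX}Y - \mathcal{C}'(X,Y)$, noting that $\mathcal{C}'(X,Y)$ is vertical because $\mathcal{C}'$ is semi-basic. For the horizontal part I must show $h[hX,JY] = -h\overast{D}_{JY}X$. I plan to compute $h\overast{D}_{JY}X = \overast{D}_{JY}hX$ (by $\overast{D}h = 0$), then rewrite this as $F\overast{D}_{JY}JX = F(J[JY,X]) = h[JY,X]$, using $\overast{D}F = 0$, Corollary \ref{chern.nab.}(a), and the operator identity $FJ = h$. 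What remains is $h[vX,JY] = 0$, which follows by writing $vX = JW$ (since $V(TM) = \mathrm{Im}(J)$) and invoking $[J,J]=0$ once more.

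For part (c), I will lean on axiom (b) of Theorem \ref{chernconnc.}: $J\overast{T}(hX,hY) = 0$ forces $\overast{T}(hX,hY)$ into $V(TM)$. Projecting the identity $\overast{T}(hX,hY) = \overast{D}_{hX}hY - \overast{D}_{hY}hX - [hX,hY]$ horizontally, and using $\overast{D}h = 0$ to conclude that $\overast{D}_{hX}hY$ and $\overast{D}_{hY}hX$ are horizontal, yields $h[hX,hY] = h(\overast{D}_{hX}Y - \overast{D}_{hY}X)$. The vertical piece is $v[hX,hY] = -\mathfrak{R}(X,Y)$, a direct unpacking of $\mathfrak{R} = -\tfrac{1}{2}[h,h]$ evaluated on horizontal arguments. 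Summing the two components recovers (c). The main obstacle is not conceptual but bookkeeping: keeping scrupulous track of the $h$-/$v$-decomposition at each step --- in particular recognising $\mathcal{C}'(X,Y)\in V(TM)$ from its semi-basic character, and using $[J,J]=0$ at precisely the two places where a bracket with a vertical argument must be shown to remain vertical --- so that the horizontal and vertical sides match the stated formulas exactly.
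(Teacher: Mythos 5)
Your proof is correct, and for parts (a) and (b) it is essentially the paper's argument, merely reorganised: the paper computes the full expression $J\overast{D}_{hX}Y-h\overast{D}_{JY}X$ in one chain and recognises $v[hX,JY]+h[hX,JY]$ at the end, while you match the vertical and horizontal components separately; a small bonus of your version is that you actually justify the identity $h[JY,X]=h[JY,hX]$ (equivalently $h[vX,JY]=0$) via $[J,J]=0$, which the paper uses without comment. Part (c) is where you genuinely diverge. The paper substitutes the explicit formula $\overast{D}_{hX}hY=Fv[hX,JY]+F\mathcal{C}'(X,Y)$, uses the symmetry of $\mathcal{C}'$, and then invokes the vanishing of the torsion $t$ of the Barthel connection to convert $Fv[hX,JY]+Fv[JX,hY]$ into $h[hX,hY]$ --- in effect re-deriving $J\,\overast{T}(hX,hY)=0$ inside the computation. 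You instead take axiom (b) of Theorem \ref{chernconnc.} as already established, observe that $\mathrm{Ker}(J)=V(TM)$ forces $\overast{T}(hX,hY)$ to be vertical, and read off the horizontal component from $\overast{D}h=0$; this is shorter and avoids touching $\mathcal{C}'$ or $t$ at all, at the cost of leaning on the theorem's axioms rather than only on the determining formulas of Corollary \ref{chern.nab.} (which is legitimate here, since both are available by this point). Your appeal to $v[hX,hY]=-\mathfrak{R}(X,Y)$ as a ``direct unpacking'' of $\mathfrak{R}=-\tfrac12[h,h]$ is slightly glib --- the unpacking also produces an $h[vX,vY]$ term that vanishes only because the vertical distribution is integrable --- but the paper itself simply cites this identity from the literature, so nothing is lost.
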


\begin{proof}~\par
\noindent \textbf{(a)} By Corollary \ref{chern.nab.} and the fact that $[J,J]=0$ (\ref{J}), we get
$$   J(\,\overast{D}_{JX}Y-\,\overast{D}_{JY}X)=\,\overast{D}_{JX}JY-\,\overast{D}_{JY}JX
   =J[JX,Y]-J[JY,X]
   =[JX,JY].$$
\noindent \textbf{(b)} By Corollary \ref{chern.nab.} and the identity  $h[JY,X]=h[JY,hX]$, we obtain
\begin{eqnarray*}
   J(\,\overast{D}_{hX}Y)-h(\,\overast{D}_{JY}X)&=&\,\overast{D}_{hX}JY-\,\overast{D}_{JY}hX  \\
   &=&v[hX,JY]+{\mathcal{C}'}(X,Y)-h[JY,X] \\
   &=&v[hX,JY]-h[JY,X]+{\mathcal{C}'}(X,Y)\\
   &=&v[hX,JY]+h[hX,JY]+{\mathcal{C}'}(X,Y)\\
   &=&[hX,JY]+\mathcal{C}'(X,Y).
 \end{eqnarray*}
 \noindent \textbf{(c)} Again by Corollary \ref{chern.nab.} and   the symmetry property of $\mathcal{C}'$, we have
\begin{eqnarray*}
   h(\,\overast{D}_{hX}Y-\,\overast{D}_{hY}X)&=&\,\overast{D}_{hX}hY-\,\overast{D}_{hY}hX  \\
   &=&Fv[hX,JY]+F\mathcal{C}'(X,Y)-Fv[hY,JX]-F\mathcal{C}'(Y,X) \\
      &=&Fv[hX,JY]+Fv[JX,hY].
    \end{eqnarray*}
 As the torsion of $\Gamma$  vanishes, then $0=t(X,Y)=v[JX,hY]+v[hX,JY]-J[hX,hY]$, from which  $Fv[JX,hY]+Fv[hX,JY]=FJ[hX,hY]=h[hX,hY]$. Consequently,
 $$ h(\,\overast{D}_{hX}Y-\,\overast{D}_{hY}X) =h[hX,hY]=[hX,hY]-v[hX,hY]=
   [hX,hY]+\mathfrak{R}(X,Y),$$
  where we have used the identity $\mathfrak{R}(X,Y)=-v[hX,hY]$ \cite{Nabil.2}.
\end{proof}
\begin{rem}\em{The last  identity of Lemma \ref{chern.[]} retrieves a result of \cite{Nabil.2}: A necessary and sufficient condition for the horizontal
 distribution to be completely integrable is that  $\mathfrak{R}$ vanishes}.
\end{rem}
\begin{prop}
The $h$-torsion, $hv$-torsion  and $v$-torsion of Chern connection $\,\overast{D}$ are given by:
\begin{description}
  \item[(a)]$\,\overast{T}(hX,hY)=\mathfrak{R}(X,Y).$
  \item[(b)]$\,\overast{T}(hX,JY)=\mathcal{C}'(X,Y).$
   \item[(c)]$\,\overast{T}(JX,JY)=0.$
\end{description}
\end{prop}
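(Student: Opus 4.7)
The plan is to treat the three assertions as direct computations from the definition of the classical torsion $\overast{T}(X,Y)=\overast{D}_X Y-\overast{D}_Y X-[X,Y]$, using the explicit formulas for $\overast{D}$ supplied by Corollary \ref{chern.nab.} together with the bracket identities in Lemma \ref{chern.[]}. Since $\overast{D}J=0$ and $\overast{D}h=0$, in every case we may interchange $J$ and $h$ with the connection, so that $\overast{D}_X JY=J\overast{D}_X Y$ and $\overast{D}_X hY=h\overast{D}_X Y$; this is what allows the bracket identities (which are stated with $J$ and $h$ inside the connection) to mesh cleanly with the torsion formulas (which want $J$ and $h$ outside).

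For part (c), I would write $\overast{T}(JX,JY)=\overast{D}_{JX}JY-\overast{D}_{JY}JX-[JX,JY]$, apply Corollary \ref{chern.nab.}(a) to each of the first two terms, and then use $[J,J]=0$ (equivalently, the classical identity $[JX,JY]=J[JX,Y]-J[JY,X]$ that follows from $J^2=0$) to cancel everything. For part (b), I would apply Corollary \ref{chern.nab.}(b) to obtain $\overast{D}_{hX}JY=v[hX,JY]+\mathcal{C}'(X,Y)$, then invoke Lemma \ref{chern.[]}(b) (with $J$ and $h$ moved outside the connection using $\overast{D}J=\overast{D}h=0$) to rewrite $[hX,JY]=\overast{D}_{hX}JY-\overast{D}_{JY}hX-\mathcal{C}'(X,Y)$; substituting this into the torsion formula, the connection terms cancel and only $\mathcal{C}'(X,Y)$ survives. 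For part (a), Lemma \ref{chern.[]}(c) already states $[hX,hY]=\overast{D}_{hX}hY-\overast{D}_{hY}hX-\mathfrak{R}(X,Y)$ after moving $h$ outside, so substituting directly into $\overast{T}(hX,hY)$ produces $\mathfrak{R}(X,Y)$ at once.

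Since Lemma \ref{chern.[]} has already absorbed the technical content of the proof (namely the rewriting of Lie brackets using the torsion-freeness of Barthel's connection, the symmetry of $\mathcal{C}'$, and the identity $\mathfrak{R}(X,Y)=-v[hX,hY]$), this proposition is essentially a three-line bookkeeping argument. There is no genuine obstacle; the only care required is to keep track of where $J$ and $h$ sit relative to $\overast{D}$ and to invoke $\overast{D}J=0$, $\overast{D}h=0$ consistently so that the corollary and the lemma apply in matching forms.
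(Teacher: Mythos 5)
Your proposal is correct and follows essentially the same route as the paper: part \textbf{(a)} from Lemma \ref{chern.[]}\,\textbf{(c)}, part \textbf{(c)} from $[J,J]=0$, and part \textbf{(b)} from the formula $\overast{D}_{hX}JY=v[hX,JY]+\mathcal{C}'(X,Y)$. The only cosmetic difference is in \textbf{(b)}, where you cite Lemma \ref{chern.[]}\,\textbf{(b)} directly (using $\overast{D}J=\overast{D}h=0$ to move $J$ and $h$ outside the connection), whereas the paper redoes the underlying bracket computation from Corollary \ref{chern.nab.} and the identity $h[JX,vY]=0$; since that lemma was itself proved by exactly that computation, the two arguments coincide in substance.
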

\begin{proof}
~\par
\noindent \textbf{(a)} Follows directly from the definition of $\,\overast{T}(hX,hY)$ and Lemma \ref{chern.[]} \textbf{(c)}.

\noindent \textbf{(b)} By Corollary \ref{chern.nab.} and using  the property  that $h[JX,vY]=0$, we get
\begin{eqnarray*}
 \phantom{ggggg} \overast{T}(hX,JY) &=& \,\overast{D}_{hX}JY-\,\overast{D}_{JY}hX-[hX,JY] \\
   &=& v[hX,JY]+\mathcal{C}'(X,Y)- h[JY,X]-[hX,JY]\\
   &=& v[hX,JY]+\mathcal{C}'(X,Y)- h[JY,hX]-(h[hX,JY]+v[hX,JY])\\
    &=& \mathcal{C}'(X,Y).
    \end{eqnarray*}

\noindent \textbf{(c)}  Is obvious.
\end{proof}

As \,  $\overast{D}F=0$, the (classical) curvature tensor $K$ of Chern connection is completely determined by the three curvature tensors:  $h$-curvature  \, $\overast{R}$,  $hv$-curvature \, $\overast{P}$ and $v$-curvature \, $\overast{Q}$ defined respectively by:
\begin{eqnarray*}
   \overast{R}(X,Y)Z&=& K(hX,hY)JZ, \\
   \overast{P}(X,Y)Z&=& K(hX,JY)JZ,\\
  \overast{Q}(X,Y)Z&=&  K(JX,JYJ)Z.
\end{eqnarray*}
\begin{prop}\label{cherncurv.}
The h-curvature \, $\overast{R}$,  hv-curvature \,  $\overast{P}$  and  v-curvature \, $\overast{Q}$ of the Chern connection are given by:
\begin{description}
  \item[(a)] $\overast{R}(X,Y)Z=R(X,Y)Z-\mathcal{C}(F\mathfrak{R}(X,Y),Z).$
  \item[(b)] $\overast{P}(X,Y)Z=\overcirc{P}(X,Y)Z-(\,\,\overast{D}_{JY}\mathcal{C}')(X,Z).$
  \item[(c)] $\overast{Q}(X,Y)Z=0$.
\end{description}
\end{prop}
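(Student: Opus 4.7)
My plan is to leverage Corollary \ref{chern.nab.} together with $\overast{D}F=0$ to obtain the operator-level identifications
$$\overast{D}_{hX} = D_{hX}\qquad\text{and}\qquad \overast{D}_{JX} = \overcirc{D}_{JX}$$
on every vector field of $TM$. This follows by decomposing any $W=hW+vW=F(JW)+J(FvW)$ and using that all three connections preserve $F$ and agree with each other on the relevant distinguished direction. Consequently, Chern coincides with Cartan in horizontal directions and with Berwald in vertical ones, so each of $\overast{R}, \overast{P}, \overast{Q}$ differs from a curvature we already control only through a small, localisable correction.

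For part \textbf{(c)}, the identification $\overast{D}_{JX}=\overcirc{D}_{JX}$ reduces $\overast{Q}$ to the $v$-curvature of Berwald. Expanding $\overcirc{D}_{JX}J[JY,Z]=J[JX,[JY,Z]]$, and writing $[JX,JY]=J(F[JX,JY])$ (legitimate since $[JX,JY]$ is vertical by $[J,J]=0$), the three terms in the defining formula combine to $J$ applied to the left-hand side of the Jacobi identity, which vanishes.

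For part \textbf{(a)}, expand $\overast{R}(X,Y)Z$ from the definition. The first two terms are identical to those of $R(X,Y)Z$ because $\overast{D}_{hX}=D_{hX}$. The sole discrepancy arises in the bracket term, where $[hX,hY]=h[hX,hY]+v[hX,hY]$ and $v[hX,hY]=-\mathfrak{R}(X,Y)=-J(F\mathfrak{R}(X,Y))$. The horizontal piece of the bracket cancels, while on the vertical piece the Cartan--Berwald difference gives
$$\overast{D}_{-\mathfrak{R}(X,Y)}JZ - D_{-\mathfrak{R}(X,Y)}JZ = \mathcal{C}(F\mathfrak{R}(X,Y),Z).$$
Transferring this to the sign in the definition of curvature yields $\overast{R}(X,Y)Z = R(X,Y)Z - \mathcal{C}(F\mathfrak{R}(X,Y),Z)$.

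For part \textbf{(b)}, compare $\overast{P}$ with $\overcirc{P}$. Expanding each of the three pieces of the definition and using the pointwise rule $\overast{D}_{hX}V = \overcirc{D}_{hX}V + \mathcal{C}'(X,FV)$ for vertical $V$, one collects a total discrepancy
$$\mathcal{C}'(X,h[JY,Z]) - \overcirc{D}_{JY}(\mathcal{C}'(X,Z)) - \mathcal{C}'([hX,JY],Z).$$
The final step is to recognise this as $-(\overast{D}_{JY}\mathcal{C}')(X,Z)$, for which one needs $\overast{D}_{JY}hZ = h[JY,Z]$ (from $F\overast{D}_{JY}JZ = FJ[JY,Z] = h[JY,Z]$), the semi-basicity of $\mathcal{C}'$ (which discards the vertical components of $\overast{D}_{JY}X$ and $\overast{D}_{JY}Z$), and the identity $\mathcal{C}'([hX,JY],Z) = -\mathcal{C}'(h[JY,X],Z)$ stemming from $h[JX,JY]=0$ (a consequence of $[J,J]=0$). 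This last reassembly is the main obstacle: three unrelated-looking error terms must be folded into a single covariant derivative of $\mathcal{C}'$, and the semi-basic property must be invoked repeatedly to kill spurious vertical contributions before the terms combine.
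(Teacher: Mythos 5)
Your proposal is correct and follows essentially the same route as the paper: for part \textbf{(a)} the paper likewise uses $\overast{D}_{hX}=D_{hX}$ to make the first two curvature terms coincide with Cartan's and then localises the discrepancy to the vertical part of $[hX,hY]$, where the Cartan--Chern difference is $\mathcal{C}$ evaluated at $F\mathfrak{R}(X,Y)$. The paper only writes out \textbf{(a)} and dismisses \textbf{(b)} and \textbf{(c)} as ``similar''; your detailed reassembly of the three error terms into $-(\overast{D}_{JY}\mathcal{C}')(X,Z)$ and the Jacobi-identity argument for $\overast{Q}=0$ are both sound and fill in exactly what the paper omits.
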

\begin{prof} We prove \textbf{(a)} only. The other expressions can be proved similarly.
As $\,\overast{D}_{hX}JY=D_{hX}JY$ (Corollary \ref{chern.nab.}(b)), we have
\begin{eqnarray*}
  \overast{R}(X,Y)Z
   &=&\,\overast{D}_{hX}\,\overast{D}_{hY}JZ-\,\overast{D}_{hY}\,\overast{D}_{hX}JZ-\,\overast{D}_{[hX,hY]}JZ \\
   &=& D_{hX}D_{hY}JZ-D_{hY}D_{hX}JZ-\,\overast{D}_{[hX,hY]}JZ \\
   &=& R(X,Y)Z+D_{[hX,hY]}JZ-\,\overast{D}_{[hX,hY]}JZ \\
   &=& R(X,Y)Z+D_{JF[hX,hY]}JZ-\,\overast{D}_{JF[hX,hY]}JZ
\end{eqnarray*}
By (\ref{cartanconn.}) and Corollary \ref{chern.nab.}, the last equation takes the form
\begin{eqnarray*}
\overast{R}(X,Y)Z
   &=&R(X,Y)Z+\mathcal{C}(F[hX,hY],Z)\\
   &=&R(X,Y)Z-\mathcal{C}(F\mathfrak{R}(X,Y),Z),
\end{eqnarray*}
where we have used the identity $\mathfrak{R}(X,Y)=-v[hX,hY]$ and the fact that $C$ is semi-basic.
\end{prof}

\begin{prop}\label{R,P,S chern}
The $h$-curvature \, $\overast{R}$ and  $hv$-curvature \, $\overast{P}$  of  Chern connection have the following properties:
\begin{description}
  \item[(a)] $\overast{R}(X,Y)S=\mathfrak{R}(X,Y).$
  \item[(b)] $\overast{P}(X,Y)S=\overast{P}(S,Y)X=\mathcal{C}'(X,Y).$
  \item[(c)] $\overast{P}(X,S)Z=0$.
\end{description}
\end{prop}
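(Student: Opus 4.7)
The plan is to compute each curvature at $S$ directly from the definition
$$K(U,V)W = \overast{D}_U\overast{D}_V W - \overast{D}_V\overast{D}_U W - \overast{D}_{[U,V]}W,$$
simplifying systematically via the defining rules of Chern connection in Corollary \ref{chern.nab.}, the fact $\overast{D}C=v$ from Theorem \ref{chernconnc.}, the identity $\mathfrak{R}(X,Y)=-v[hX,hY]$, and the vanishing $\mathcal{C}'(X,S)=\mathcal{C}'(S,X)=0$ coming from (\ref{c(s)}) together with the symmetry of $\mathcal{C}'$.

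For part \textbf{(a)}, writing $JS=C$ yields $\overast{D}_{hU}C=v(hU)=0$, so the first two terms of $\overast{R}(X,Y)S$ vanish and what remains is $-\overast{D}_{[hX,hY]}C=-v[hX,hY]=\mathfrak{R}(X,Y)$. For the first equality in \textbf{(b)}, the same substitution gives $\overast{D}_{JY}C=v(JY)=JY$ while $\overast{D}_{hX}C=0$ and $\overast{D}_{[hX,JY]}C=v[hX,JY]$, so $\overast{P}(X,Y)S=\overast{D}_{hX}(JY)-v[hX,JY]$, which by Corollary \ref{chern.nab.}(b) equals $\mathcal{C}'(X,Y)$.

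For the second equality of \textbf{(b)} and for \textbf{(c)}, I would invoke Proposition \ref{cherncurv.}(b) to write $\overast{P}$ in terms of Berwald's $\overcirc{P}$ and a derivative of $\mathcal{C}'$. An auxiliary identity I need is $\overast{D}_{JY}S=hY$: indeed $\overast{D}_{JY}S$ is horizontal (since $\overast{D}h=0$ and $hS=S$), and $J(\overast{D}_{JY}S)=\overast{D}_{JY}(JS)=\overast{D}_{JY}C=JY$, whence applying $F$ gives $\overast{D}_{JY}S=hY$. Then $(\overast{D}_{JY}\mathcal{C}')(S,X)$ collapses to $-\mathcal{C}'(hY,X)=-\mathcal{C}'(X,Y)$ using $\mathcal{C}'(S,\cdot)=0$, symmetry, and the semi-basic property of $\mathcal{C}'$. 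An analogous calculation in the Berwald connection (which lacks the $\mathcal{C}'$ correction) yields $\overcirc{P}(S,Y)X=0$, proving the second equality of \textbf{(b)}. For \textbf{(c)}, the parallel argument identifies $\overcirc{P}(X,S)Z$ with $(\overast{D}_C\mathcal{C}')(X,Z)$, the homogeneity relation $\mathcal{L}_C\mathcal{C}'=-\mathcal{C}'$ playing the decisive role in the cancellation.

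The main obstacle is part \textbf{(c)}: the required identification rests on the homogeneity of $\mathcal{C}'$ and careful bookkeeping of Lie brackets involving $C$ and $S$, using Jacobi's identity together with $[C,h]=0$ (from (\ref{chx})) and the Fr\"olicher--Nijenhuis relation $[C,J]=-J$ (equivalently $[C,JW]=J[C,W]-JW$). If one prefers, \textbf{(c)} can be verified by a purely direct expansion, substituting $\overast{D}_C JZ=J[C,Z]$ from Corollary \ref{chern.nab.}(a) with $X=S$, and then applying the Jacobi identity $[C,[hX,JZ]]=[[C,hX],JZ]+[hX,[C,JZ]]$ to show $v[hX,J[C,Z]]=[C,v[hX,JZ]]-v[h[C,X],JZ]+v[hX,JZ]$; the remaining $\mathcal{C}'$-terms then collapse via the same homogeneity relation.
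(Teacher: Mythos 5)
Your proposal is correct, and for part \textbf{(a)} and the first equality of \textbf{(b)} it takes a genuinely more direct route than the paper. The paper obtains \textbf{(a)} from Proposition \ref{cherncurv.}(a) combined with $R(X,Y)S=\mathfrak{R}(X,Y)$ (Lemma \ref{car.curv.}) and $\mathcal{C}(\cdot,S)=0$, and obtains $\overast{P}(X,Y)S=\mathcal{C}'(X,Y)$ from Proposition \ref{cherncurv.}(b) by expanding $-(\,\overast{D}_{JY}\mathcal{C}')(X,S)$ via $\overcirc{D}_{JY}S=FJ[JY,S]=hY$ (implicitly using $\overcirc{P}(X,Y)S=0$). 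You instead substitute $JS=C$ into the definition of the curvature and exploit $\overast{D}C=v$: this kills the first two terms of $\overast{R}(X,Y)S$, leaving $-\,\overast{D}_{[hX,hY]}C=-v[hX,hY]=\mathfrak{R}(X,Y)$, and reduces $\overast{P}(X,Y)S$ to $\overast{D}_{hX}JY-v[hX,JY]=\mathcal{C}'(X,Y)$ in one line, with no appeal to the Cartan or Berwald curvatures. For the second equality of \textbf{(b)} and for \textbf{(c)} you revert to essentially the paper's route through Proposition \ref{cherncurv.}(b), but you fill in what the paper dismisses as ``similarly'': the identity $\overast{D}_{JY}S=hY$, the collapse of $(\,\overast{D}_{JY}\mathcal{C}')(S,X)$ to $-\mathcal{C}'(X,Y)$, and the homogeneity $[C,\mathcal{C}']=-\mathcal{C}'$ yielding $(\,\overast{D}_{C}\mathcal{C}')=0$; your Jacobi-identity computation for the direct variant of \textbf{(c)} is also correct (note $[C,v\,\cdot\,]=v[C,\cdot\,]$ follows from $[C,\Gamma]=0$). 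The one caveat is that the vanishing of $\overcirc{P}$ when contracted with $S$ in the first or second slot, and the degree-zero homogeneity of $\mathcal{C}'$, are nowhere stated in the paper, so a complete write-up should verify them (e.g.\ from the explicit formula for $\overcirc{P}$ in Table 1 together with $[C,hX]=h[C,X]$ and $[C,J]=-J$); since the paper itself leans on the same unstated ``properties of $\overcirc{P}$'', this is a gap you share with the original rather than one you introduce.
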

\begin{prof}
~\par
\noindent \textbf{(a)} Follows from Proposition \ref{cherncurv.}, Lemma \ref{car.curv.} and (\ref{c(s)}).

\noindent \textbf{(b)} By Proposition \ref{cherncurv.},  (\ref{jx}), the properties of $\mathcal{C}'$ and the
 properties of \, $\overcirc{P}$, we get
\vspace{-6pt}
\begin{eqnarray*}
 \overast{P}(X,Y)S &=&-(\,\overast{D}_{JY}\mathcal{C}')(X,S)=- (\,\overcirc{D}_{JY}\mathcal{C}')(X,S)=\mathcal{C}'(X,\,\overcirc{D}_{JY}S)=\mathcal{C}'(X,FJ[JY,S])\\
&=&\mathcal{C}'(X,FJY)
                   =\mathcal{C}'(X,Y).
\vspace{-6pt}
\end{eqnarray*}

\noindent \textbf{(c)} can be proved similarly.
\end{prof}

\vspace{8pt}
To study the Bianchi identities for Chern connection, let us first write  the Bianchi identities for an arbitrary connection $\nabla$.
\begin{lem}\label{generalbianchi} Let $\nabla$ be a linear connection on $M$ with  torsion  tensor $\textbf{T}$ and curvature tensor $\textbf{K}$. For every
$X,Y,Z \in \mathfrak{X}(M)$, we
have\,\emph{:}\,
\begin{description}
     \item[(I)]$\mathfrak{S}_{X,Y,Z}\{\textbf{K}(X,Y) Z\}=\mathfrak{S}_{X,Y,Z}\{{\textbf{T}(\textbf{T}}(X,Y),Z)
   +(\nabla_X{T})(Y,Z) \}$,

\item[(II)]
$\mathfrak{S}_{X,Y,Z}\{{\textbf{K}}({\textbf{T}}(X,Y),Z)+(\nabla_X\textbf{K})(Y,Z)\}=0$,
\end{description}
where the symbol $\mathfrak{S}_{X,Y,Z}$ denotes cyclic sum over
$X$,$Y$ and $Z$.
\end{lem}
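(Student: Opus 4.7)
Both identities are the classical first and second Bianchi identities for a general linear connection; they hold purely as a consequence of the definitions of $\mathbf{T}$ and $\mathbf{K}$ together with the Jacobi identity for the Lie bracket of vector fields. My plan is to unfold every object in sight in terms of $\nabla$ and $[\,\cdot\,,\,\cdot\,]$, take the cyclic sum, and collect terms.

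For identity \textbf{(I)}, I will start from
$$\mathbf{K}(X,Y)Z=\nabla_{X}\nabla_{Y}Z-\nabla_{Y}\nabla_{X}Z-\nabla_{[X,Y]}Z,$$
so that the cyclic sum $\mathfrak{S}_{X,Y,Z}\{\mathbf{K}(X,Y)Z\}$ produces three second-order covariant terms and three bracket terms. On the other side, I will expand
$$(\nabla_{X}\mathbf{T})(Y,Z)=\nabla_{X}\mathbf{T}(Y,Z)-\mathbf{T}(\nabla_{X}Y,Z)-\mathbf{T}(Y,\nabla_{X}Z)$$
and substitute $\mathbf{T}(Y,Z)=\nabla_{Y}Z-\nabla_{Z}Y-[Y,Z]$, and similarly expand $\mathbf{T}(\mathbf{T}(X,Y),Z)$ using the definition of $\mathbf{T}$. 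After taking the cyclic sum, the double covariant derivatives on the right reproduce exactly those coming from $\mathfrak{S}\{\mathbf{K}(X,Y)Z\}$; the inner torsion terms $\mathbf{T}(\mathbf{T}(X,Y),Z)$ cancel the bilinear terms $\mathbf{T}(\nabla_{X}Y,Z)+\mathbf{T}(\nabla_{Y}X,Z)$ after reindexing; and the remaining bracket terms reduce to $\mathfrak{S}_{X,Y,Z}\{[[X,Y],Z]\}$, which vanishes by the Jacobi identity. This accounts for every term and yields \textbf{(I)}.

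For identity \textbf{(II)}, applied to an arbitrary test vector field $W$, I will use
$$(\nabla_{X}\mathbf{K})(Y,Z)W=\nabla_{X}\bigl(\mathbf{K}(Y,Z)W\bigr)-\mathbf{K}(\nabla_{X}Y,Z)W-\mathbf{K}(Y,\nabla_{X}Z)W-\mathbf{K}(Y,Z)\nabla_{X}W,$$
together with the explicit expression of $\mathbf{K}$. After taking the cyclic sum in $(X,Y,Z)$, the triple covariant derivatives of the form $\nabla_{X}\nabla_{Y}\nabla_{Z}W$ cancel pairwise. The remaining terms group into two classes: those involving $\nabla_{[\,\cdot\,,\,\cdot\,]}$ (coming both from the expansion of $\mathbf{K}$ and from $\nabla_X\bigl(\nabla_{[Y,Z]}W\bigr)$), and those involving $\mathbf{K}(\nabla_{X}Y,Z)W$. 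Replacing $\nabla_{X}Y-\nabla_{Y}X=\mathbf{T}(X,Y)+[X,Y]$ converts the bilinear pieces into $\mathbf{K}(\mathbf{T}(X,Y),Z)W$ plus further bracket terms, and the leftover brackets again sum to $\mathfrak{S}\{[[X,Y],Z]\}=0$ via Jacobi.

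The only real difficulty in either part is the combinatorial bookkeeping; no geometric input beyond the definitions and the Jacobi identity is needed. In particular, this lemma does not use anything specific to the Klein--Grifone setting, so the proof is completely formal. I would likely suppress the full calculation in the write-up and simply refer to the standard argument, since the subtlety lies not in any single manipulation but in verifying that every term in the final cyclic sums matches up correctly.
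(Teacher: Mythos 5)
Your plan is correct, but there is nothing in the paper to compare it with: Lemma \ref{generalbianchi} is stated without proof, being the classical first and second Bianchi identities for a linear connection with torsion, and the authors simply invoke it to derive Proposition \ref{bianchichern}. Your direct strategy --- expand $\mathbf{K}$, $\mathbf{T}$, $\nabla\mathbf{T}$ and $\nabla\mathbf{K}$ in terms of $\nabla$ and the Lie bracket, take cyclic sums, and finish with the Jacobi identity --- is exactly the standard argument and does supply the missing verification. One bookkeeping point in your sketch of (I) deserves correction: after reindexing, the bilinear terms coming from $\mathfrak{S}_{X,Y,Z}\{-\mathbf{T}(\nabla_X Y,Z)-\mathbf{T}(Y,\nabla_X Z)\}$ combine to $-\mathfrak{S}_{X,Y,Z}\{\mathbf{T}(\nabla_X Y-\nabla_Y X,Z)\}=-\mathfrak{S}_{X,Y,Z}\{\mathbf{T}(\mathbf{T}(X,Y),Z)\}-\mathfrak{S}_{X,Y,Z}\{\mathbf{T}([X,Y],Z)\}$; the first summand is what transfers $\mathbf{T}(\mathbf{T}(X,Y),Z)$ to the other side of the identity (it is a difference, not a sum, of the bilinear terms, and it does not ``cancel'' so much as reappear as the required right-hand term), while the second must itself be expanded by the definition of $\mathbf{T}$ so as to produce the $-\nabla_{[X,Y]}Z$ piece of $\mathbf{K}$ together with the Jacobi sum $\mathfrak{S}_{X,Y,Z}\{[[X,Y],Z]\}=0$. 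With that adjustment every term matches, and the same mechanism ($\nabla_X Y-\nabla_Y X=\mathbf{T}(X,Y)+[X,Y]$ plus Jacobi) closes identity (II) exactly as you describe. You are also right that nothing here is specific to the Klein--Grifone setting; the lemma is purely formal, and it is legitimate, as the authors implicitly do, to cite it as classical rather than reprove it.
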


Applying the identities (I) and (II) on Chern connection, for different triples of vector fields $(hX,hY,hZ), (hX,hY,JZ)$, ..., we obtain many identities. Here, we give only the most important of these identities.
\begin{prop}\label{bianchichern}
The first Bianchi identity for Chern connection yields:
\begin{description}
  \item[(a)]$\mathfrak{S}_{X,Y,Z}\, \{\,\overast{R}(X,Y)Z\}=0$.

  \item[(b)] $\mathfrak{S}_{X,Y,Z}\,\{(\,\overast{D}_{hX}\,\mathfrak{R})(Y,Z)\}=\mathfrak{S}_{X,Y,Z}\{\,
  \mathcal{C}'(F\mathfrak{R}(X,Y),Z)\}$.

  \item[(c)] $\,\overast{P}(X,Y)Z=\,\overast{P}(Z,Y)X$.

  \item[(d)] $\,\overast{P}(X,Y)Z-\overast{P}(X,Z)Y=(\,\overast{D}_{JZ}\mathcal{C}')(X,Y)-(\,\overast{D}_{JY}\mathcal{C}')(X,Z)$.

\end{description}
The second Bianchi identity for Chern connection yields:
\begin{description}
  \item[(e)]$\mathfrak{S}_{X,Y,Z}\{\,(\,\overast{D}_{hX}\,\overast{R})(Y,Z)\}=\mathfrak{S}_{X,Y,Z}\{\, \overast{P}(X,F\mathfrak{R}(Y,Z))\}$.

  \item[(f)]$(\,\overast{D}_{hX}\,\overast{P})(Y,Z)-(\,\overast{D}_{hY}\,\overast{P})(X,Z)+(\,\overast{D}_{JZ}\,\overast{R})(X,Y)
  =\overast{P}(X,F\mathcal{C}'(Y,Z))\\
  -\overast{P}(Y,F\mathcal{C}'(X,Z))$.

  \item[(g)]$(\,\overast{D}_{JY}\,\overast{P})(X,Z)=(\,\overast{D}_{JZ}\,\overast{P})(X,Y)$.
\end{description}
\end{prop}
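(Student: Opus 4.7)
The plan is to apply the general Bianchi identities of Lemma \ref{generalbianchi} to Chern connection $\overast{D}$, substituting different triples of vector fields formed from the horizontal and vertical projections, and then extracting the horizontal and vertical components of each resulting equation. The inputs available are the three torsion tensors already computed,
\[
\overast{T}(hX,hY)=\mathfrak{R}(X,Y),\qquad \overast{T}(hX,JY)=\mathcal{C}'(X,Y),\qquad \overast{T}(JX,JY)=0,
\]
together with $\overast{Q}=0$ (Proposition \ref{cherncurv.}(c)) and the identity $\overast{D}F=0$, which lets us convert between $K(\cdot,\cdot)hZ$ and $F\,K(\cdot,\cdot)JZ$ freely.

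For the first-Bianchi statements I would proceed as follows. For \textbf{(a)} and \textbf{(b)}, apply identity (I) to the all-horizontal triple $(hX,hY,hZ)$. Because $\overast{T}(hX,hY)=\mathfrak{R}(X,Y)$ is vertical, each term $\overast{T}(\overast{T}(hX,hY),hZ)$ equals $\overast{T}(\mathfrak{R}(X,Y),hZ)=-\,\mathcal{C}'(F\mathfrak{R}(X,Y),Z)$ (after swapping arguments via the antisymmetry), and the covariant derivative terms expand by the Leibniz rule using $\overast{D}h=0$. Projecting the resulting equation with $J$ gives the cyclic sum of $\overast{R}(X,Y)Z$ equal to zero, which is \textbf{(a)}; projecting with $h$ (equivalently, applying $F$) picks out the $\mathfrak{R}$-part and yields \textbf{(b)}. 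For \textbf{(c)} and \textbf{(d)}, apply (I) to the mixed triple $(hX,JY,hZ)$ (or equivalently $(hX,hZ,JY)$). All the terms of the form $\overast{T}(\overast{T}(\cdot,\cdot),\cdot)$ collapse because $\overast{T}(JX,JY)=0$ and $\mathcal{C}'$ is semi-basic, so only the covariant-derivative terms survive; the cyclic sum then splits naturally into the $\overast{P}$-symmetry \textbf{(c)} and the hv-expression \textbf{(d)}.

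For the second Bianchi identities, I apply identity (II) of Lemma \ref{generalbianchi}. For \textbf{(e)} take the triple $(hX,hY,hZ)$: the $\overast{T}$-terms become $\overast{K}(\mathfrak{R}(X,Y),hZ)$, which on applying $J$ transforms into $\overast{P}(Z,F\mathfrak{R}(X,Y))$, rearranged via the symmetry established in \textbf{(c)} to give the stated formula. For \textbf{(f)} use the triple $(hX,hY,JZ)$: the three torsion-contracted curvatures on the right involve $\overast{T}(hX,hY)=\mathfrak{R}(X,Y)$ (producing the $(\overast{D}_{JZ}\overast{R})$ term after arranging) and $\overast{T}(hX,JZ)=\mathcal{C}'(X,Z)$ (producing the two $\overast{P}(\cdot,F\mathcal{C}'(\cdot,\cdot))$ terms). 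Finally, for \textbf{(g)} take $(hX,JY,JZ)$; since $\overast{T}(JY,JZ)=0$ and $\overast{Q}=0$, the identity reduces immediately to the stated symmetry between $(\overast{D}_{JY}\overast{P})(X,Z)$ and $(\overast{D}_{JZ}\overast{P})(X,Y)$.

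The routine part is the cyclic-sum bookkeeping, which is mechanical once the substitutions are fixed. The main obstacle I expect is the systematic verification that the horizontal and vertical projections of each Bianchi identity produce exactly the listed formulas, and in particular making sure that terms like $\overast{D}_{hX}\mathcal{C}'$ and $\overast{D}_{hX}\mathfrak{R}$ appear with the correct placement of arguments (since both tensors are semi-basic but not, a priori, parallel). One must use $\overast{D}J=0$, $\overast{D}h=0$ and $\overast{D}F=0$ repeatedly when commuting the Chern covariant derivative past $J$, $h$ or $F$; only then do the projections of the raw Bianchi identities line up cleanly with items \textbf{(a)}--\textbf{(g)}.
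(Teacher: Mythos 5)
Your approach is exactly the paper's: the paper offers no detailed proof of this proposition, saying only that the identities follow by applying (I) and (II) of Lemma \ref{generalbianchi} to the various projected triples, and your proposal fills in that outline correctly for most items, using the computed torsions, $\overast{Q}=0$, and $\overast{D}J=\overast{D}h=\overast{D}F=0$ to split each raw identity into horizontal and vertical components.

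Two details of your bookkeeping need correction, though neither affects the viability of the method. First, for \textbf{(a)}/\textbf{(b)}: the left side of identity (I) on $(hX,hY,hZ)$ is horizontal while the right side is vertical, so the \emph{vertical} projection (not the $h$- or $F$-projection) is what forces the right side to vanish and yields \textbf{(b)}; the horizontal (equivalently $J$-) projection gives \textbf{(a)}. Second, and more substantively, identity \textbf{(d)} does not come from the triple $(hX,JY,hZ)$: that triple's horizontal part gives \textbf{(c)}, but its vertical part produces an expression for $\overast{R}(Z,X)Y$ in terms of $\overast{D}_{h\cdot}\mathcal{C}'$ and $\mathfrak{R}$, not \textbf{(d)}. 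To obtain \textbf{(d)} you must instead apply (I) to $(hX,JY,JZ)$, where the left side collapses to $\overast{P}(X,Y)Z-\overast{P}(X,Z)Y$ (using $\overast{Q}=0$), all the $\overast{T}(\overast{T}(\cdot,\cdot),\cdot)$ terms vanish, and the surviving covariant-derivative terms are precisely $(\overast{D}_{JZ}\mathcal{C}')(X,Y)-(\overast{D}_{JY}\mathcal{C}')(X,Z)$ — consistent with the fact that \textbf{(d)} involves derivatives along $JY$ and $JZ$. With these reassignments the computation goes through as you describe.
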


\begin{cor}\label{nablacr} The h-curvature\,  $\overast{R}$ and the hv-curvature\, $\overast{P}$ satisfy:\\
\textbf{\em(a)} $\overast{D}_C\,\,\overast{R}=0$, \quad \textbf{\em(b)}\, $\overast{D}_C\,\,\overast{P}=0$, \quad \textbf{\em(c)}\, $\overast{P}$ is totally symmetric if \, $\overast{D}_{JZ}\,\mathcal{C}'=0$.
\end{cor}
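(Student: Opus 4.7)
The plan is to deduce all three parts of the corollary from the Bianchi identities of Proposition~\ref{bianchichern}, specialized at the canonical spray $S$, together with the properties listed in Proposition~\ref{R,P,S chern} and the identities $\overast{D}J=0$ and $\overast{D}C=v$ from Theorem~\ref{chernconnc.}.

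For \textbf{(a)}, I would substitute $Z=S$ into the second Bianchi identity \textbf{(f)}. Since $\mathcal{C}'(\cdot,S)=0$, the right-hand side vanishes; and because $JS=C$, the third term on the left becomes $(\overast{D}_C\overast{R})(X,Y)$. It remains to show that $(\overast{D}_{hX}\overast{P})(Y,S)$ and $(\overast{D}_{hY}\overast{P})(X,S)$ both vanish. Expanding these covariant derivatives tensorially and using $\overast{P}(\cdot,S)=0$ from Proposition~\ref{R,P,S chern}\textbf{(c)} kills all but a residual term $\overast{P}(Y,\overast{D}_{hX}S)$. Now $J\overast{D}_{hX}S=\overast{D}_{hX}(JS)=\overast{D}_{hX}C=v(hX)=0$, so $\overast{D}_{hX}S$ lies in $\ker J=V(TM)$; and $\overast{P}(\cdot,V)=0$ for any vertical $V$, since $JV=0$ appears in the definition. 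This yields $(\overast{D}_C\overast{R})(X,Y)=0$. For \textbf{(b)}, the same idea is applied to Bianchi identity \textbf{(g)} with $Y=S$, producing $(\overast{D}_C\overast{P})(X,Z)=(\overast{D}_{JZ}\overast{P})(X,S)$; using $\overast{P}(\cdot,S)=0$ to expand the right-hand side reduces the question to the residue $\overast{P}(X,\overast{D}_{JZ}S)$, whose vanishing as an operator must be extracted from the fact that $\overast{P}(X,\cdot)$ also annihilates $\ker J$ in its third slot.

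For \textbf{(c)}, identities \textbf{(c)} and \textbf{(d)} of Proposition~\ref{bianchichern} suffice. Identity \textbf{(c)} gives the symmetry $\overast{P}(X,Y)Z=\overast{P}(Z,Y)X$ between the first and third arguments, while under the hypothesis $\overast{D}_{JZ}\mathcal{C}'=0$ identity \textbf{(d)} reduces to $\overast{P}(X,Y)Z=\overast{P}(X,Z)Y$, giving symmetry between the second and third arguments. These two transpositions generate the symmetric group $S_3$, so $\overast{P}$ is totally symmetric.

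The main technical difficulty is part~\textbf{(b)}: in contrast to \textbf{(a)}, the vector $\overast{D}_{JZ}S$ is \emph{not} vertical (one only has $J\overast{D}_{JZ}S=JZ$), so the clean vertical-killing argument of \textbf{(a)} does not apply. The cancellation has to come from the structural observation that the operator $\overast{P}(X,W)$ vanishes on any vector in $\ker J$, which must be coupled with the residue computation so that the operator $(\overast{D}_{JZ}\overast{P})(X,S)$ is trivial on both horizontal and vertical test vectors.
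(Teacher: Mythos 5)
The paper states this corollary without proof, as an immediate consequence of Proposition \ref{bianchichern}, so you are supplying a missing derivation rather than matching an existing one. Your parts \textbf{(a)} and \textbf{(c)} do this correctly: for \textbf{(a)}, setting $Z=S$ in identity \textbf{(f)} kills the right-hand side via $\mathcal{C}'(\cdot,S)=0$, and $(\overast{D}_{hX}\overast{P})(Y,S)$ collapses because $\overast{P}(\cdot,S)=0$ and because the residual insertion $\overast{D}_{hX}S$ satisfies $J\,\overast{D}_{hX}S=\overast{D}_{hX}C=v(hX)=0$, hence is annihilated by the second slot of $\overast{P}$; for \textbf{(c)}, the two transpositions coming from \textbf{(c)} and \textbf{(d)} of Proposition \ref{bianchichern} indeed generate the full symmetric group.

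Part \textbf{(b)}, however, has a genuine gap, and the patch you gesture at cannot close it. The residue in $(\overast{D}_{JZ}\overast{P})(X,S)$ is $-\overast{P}(X,\overast{D}_{JZ}S)$, an insertion in the \emph{second} slot; your observation that $\overast{P}(X,W)$ annihilates $\ker J$ in its \emph{third} slot concerns the wrong argument and contributes nothing. Since $\overast{P}(X,W)$ depends on $W$ only through $JW$, and $J\,\overast{D}_{JZ}S=\overast{D}_{JZ}C=JZ$, that residue equals exactly $-\overast{P}(X,Z)$, so identity \textbf{(g)} with $Y=S$ yields $(\overast{D}_C\overast{P})(X,Z)=-\overast{P}(X,Z)$ rather than $0$. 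A direct check confirms this: by Proposition \ref{R,P,S chern}\textbf{(b)} one has $\overast{P}(X,Y)S=\mathcal{C}'(X,Y)$, and since $\overast{D}_CS=S$ while $\overast{D}_C\mathcal{C}'=0$, the Leibniz expansion gives $(\overast{D}_C\overast{P})(X,Y)S=-\mathcal{C}'(X,Y)$. In components this is just Euler's theorem: the coefficients of $\overast{P}$ are homogeneous of degree $-1$, whereas those of $\overast{R}$ are homogeneous of degree $0$, which is precisely why the argument succeeds for \textbf{(a)} and fails for \textbf{(b)}. So either part \textbf{(b)} requires a reading of $\overast{D}_C\overast{P}$ different from the standard tensorial one, or it holds only under extra hypotheses (e.g.\ $\mathcal{C}'=0$); in any case your proposal does not establish it as stated, and you were right to single this step out as the one that does not go through.
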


\begin{prop}\label{chenh-curv.}
The h-curvature \, $\overast{R}$ has the following properties:
\begin{description}
  \item[(a)] $\overast{R}(X,Y,Z,W)=-\,\overast{R}(Y,X,Z,W)$,
  \item[(b)] $\overast{R}(X,Y,Z,W)+\,\overast{R}(Y,Z,X,W)+\,\overast{R}(Z,X,Y,W)=0$,
\end{description}
\end{prop}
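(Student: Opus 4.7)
The plan is to read both identities off from the definition of $\overast{R}$ together with results already proved. Recall that $\overast{R}(X,Y)Z = K(hX,hY)JZ$, where $K$ is the classical curvature tensor of the linear connection $\overast{D}$, and that the four-argument form $\overast{R}(X,Y,Z,W)$ is obtained by contracting the vector-valued $\overast{R}(X,Y)Z$ against $JW$ via the metric $g$ (this is the standard bookkeeping in the Klein--Grifone formalism, since both $\overast{R}(X,Y)Z$ and $JW$ are vertical).

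For part (a), skew-symmetry in the first pair of arguments follows immediately. Indeed, for any linear connection the classical curvature tensor satisfies $K(U,V) = -K(V,U)$. Applying this with $U = hX$ and $V = hY$ gives $\overast{R}(X,Y)Z = K(hX,hY)JZ = -K(hY,hX)JZ = -\overast{R}(Y,X)Z$. Pairing both sides with $JW$ via $g$ transfers the skew-symmetry to the four-argument version. No use of the special properties of Chern connection is required at this step; it is a purely formal consequence of the definition.

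For part (b), the first algebraic Bianchi identity is already available: Proposition \ref{bianchichern}(a) states $\mathfrak{S}_{X,Y,Z}\{\overast{R}(X,Y)Z\} = 0$. Taking $g$-inner product with $JW$ yields $\mathfrak{S}_{X,Y,Z}\{\overast{R}(X,Y,Z,W)\} = 0$, which is exactly the asserted identity. Thus the work has effectively been done when proving Proposition \ref{bianchichern}, which itself rests on applying Bianchi identity (I) of Lemma \ref{generalbianchi} to the triple $(hX,hY,hZ)$: the terms involving $\overast{T}$ organize into a horizontal cyclic sum of $\mathfrak{R}$'s that is captured in part (b) of that proposition, while the purely $\overast{R}$-part gives the cyclic identity used here.

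Since both items reduce to identities already established, I do not anticipate any real obstacle. The only care needed is fixing the convention for the fourth slot of $\overast{R}(X,Y,Z,W)$ and verifying that the contraction with $g(\cdot,JW)$ respects the symmetries; once that is made explicit, the argument is a one-line deduction in each case.
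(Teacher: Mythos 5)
Your proposal is correct and matches the paper's own proof: part (a) is the formal skew-symmetry $K(hX,hY)=-K(hY,hX)$ of the classical curvature (the paper simply says it ``is clear''), and part (b) is obtained exactly as you do, by pairing the cyclic identity $\mathfrak{S}_{X,Y,Z}\{\,\overast{R}(X,Y)Z\}=0$ of Proposition \ref{bianchichern}(a) with $JW$ via $g$. The extra commentary on how Proposition \ref{bianchichern} itself is derived is not needed here, but it does not affect the argument.
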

\noindent Moreover, if $\mathfrak{R}=0$, we have
\begin{description}
  \item[(c)] $\overast{R}(X,Y,Z,W)=-\,\overast{R}(X,Y,W,Z)$,
  \item[(d)]  $\overast{R}(X,Y,Z,W)=\,\overast{R}(Z,W,X,Y)$,
\end{description}
where \, $\overast{R}(X,Y,Z,W):=g(\,\overast{R}(X,Y)Z,JW)$.
\begin{proof}
{\textbf{(a)}} is clear.

\noindent {\textbf{(b)}} We have\\
${\hspace{2cm}}\overast{R}(X,Y,Z,W)+\,\overast{R}(Y,Z,X,W)+\,\overast{R}(Z,X,Y,W)\\
{\hspace{2.8cm}}=g(\,\overast{R}(X,Y)Z,JW)+g(\,\overast{R}(Y,Z)X,JW)+g(\,\overast{R}(Z,X)Y,JW)\\
{\hspace{2.8cm}}=g(\,\overast{R}(X,Y)Z+\,\overast{R}(Y,Z)X+\,\overast{R}(Z,X)Y,JW)=0$, by Proposition \ref{bianchichern}.

\noindent {\textbf{(c)}} By Theorem \ref{chernconnc.}, we have
$$hX.g(JY,JZ)= g(\,\overast{D}_{hX}JY,JZ)+g(JY,\,\overast{D}_{hX}JZ). $$
Then, we can write\\
${\hspace{2cm}}hW.(hX.g(JY,JZ))=g(\,\overast{D}_{hW}\,\overast{D}_{hX}JY,JZ)+g(\,\overast{D}_{hX}JY,\,\overast{D}_{hW}JZ)\\
{\hspace{4cm}}+g(\,\overast{D}_{hW}JY,\,\overast{D}_{hX}JZ)+g(JY,\,\overast{D}_{hW}\,\overast{D}_{hX}JZ).$ \\
Interchanging  $X$ and $W$, we get\\
${\hspace{2cm}}hX.(hW.g(JY,JZ))=g(\,\overast{D}_{hX}\,\overast{D}_{hW}JY,JZ)+g(\,\overast{D}_{hW}JY,\,\overast{D}_{hX}JZ)\\
{\hspace{4cm}}+g(\,\overast{D}_{hX}JY,\,\overast{D}_{hW}JZ)+g(JY,(\,\overast{D}_{hX}\,\overast{D}_{hW}JZ)).$\\
Using the above two equations, we obtain \\
${\hspace{2cm}}[hW,hX].g(JY,JZ)=g((\,\overast{D}_{hW}\,\overast{D}_{hX}-\,\overast{D}_{hX}\,\overast{D}_{hW})JY,JZ)\\
{\hspace{6cm}}+g(JY,(\,\overast{D}_{hW}\,\overast{D}_{hX}-\,\overast{D}_{hX}\,\overast{D}_{hW})JZ).$\\
If  $\mathfrak{R}=0$, then the horizontal distribution is completely integrable. Consequently, $[hW,hX]$ is horizontal and so $\,\overast{D}_{[hW,hX]}\,g=0$. Hence, we have\\
${\hspace{2cm}}[hW,hX].g(JY,JZ)=g(\,\overast{D}_{[hW,hX]}JY,JZ)+g(JY,\,\overast{D}_{[hW,hX]}JZ).$\\
Comparing the above two equations, we get\\
${\hspace{3cm}}g((\,\overast{D}_{hW}\,\overast{D}_{hX}-\,\overast{D}_{hX}\,\overast{D}_{hW}-\,\overast{D}_{[hW,hX]})JY,JZ)\\
{\hspace{3cm}}+g(JY,(\,\overast{D}_{hW}\,\overast{D}_{hX}-\,\overast{D}_{hX}\,\overast{D}_{hW}-\,\overast{D}_{[hW,hX]})JZ)=0.$\\
From which, $$\overast{R}(W,X,Y,Z)=-\,\overast{R}(W,X,Z,Y).$$

\noindent {\textbf{(d)}} Using \textbf{(a)}, \textbf{(b)} and \textbf{(c)}, since $\mathfrak{R}=0$, we have
$$  \overast{R}(X,Y,Z,W) =-\,\overast{R}(Y,X,Z,W)
   =  \,\overast{R}(X,Z,Y,W)+\,\overast{R}(Z,Y,X,W),$$
$$ \overast{R}(X,Y,Z,W) =-\,\overast{R}(X,Y,W,Z)
  = \,\overast{R}(Y,W,X,Z)+\,\overast{R}(W,X,Y,Z).$$
Adding the above two equation, we get
\begin{eqnarray}\label{eq.1}
\nonumber  2\,\overast{R}(X,Y,Z,W) &=&\,\overast{R}(X,Z,Y,W)+\,\overast{R}(Z,Y,X,W)  \\
  &&  +\,\overast{R}(Y,W,X,Z)+\,\overast{R}(W,X,Y,Z).
\end{eqnarray}
Similarly,
\begin{eqnarray}\label{eq.2}
\nonumber  2\,\overast{R}(Z,W,X,Y) &=&\,\overast{R}(Z,X,W,Y)+\,\overast{R}(X,W,Z,Y)  \\
  &&  +\,\overast{R}(W,Y,Z,X)+\,\overast{R}(Y,Z,W,X).
\end{eqnarray}
Comparing  (\ref{eq.1}) and (\ref{eq.2}), we get \, $\overast{R}(X,Y,Z,W)=\,\overast{R}(Z,W,X,Y)$.
\end{proof}
\begin{rem}\em{It is to be noted that if\, $\mathfrak{R}$ vanishes, we get some interesting results:
\begin{description}
  \item[$\bullet$] $\,\overast{R}=R$ (Proposition \ref{cherncurv.} \textbf{(a)}) and also $\mathfrak{S}_{X,Y,Z}\, \{{R}(X,Y)Z\}=0$.
  \item[$\bullet$] $\mathfrak{S}_{X,Y,Z}\{\,(\,\overast{D}_{hX}\,\overast{R})(Y,Z)\}=0$ (Proposition \ref{bianchichern} \textbf{(e)}).
  \item[$\bullet$] The properties \textbf{(c)} and \textbf{(d)} in Proposition \ref{chenh-curv.} hold.
\end{description}
The above properties are very similar to the properties of the  Riemannian curvature. The reason lies in the condition  $\mathfrak{R}=0$ which is stronger   than the  condition \textbf{(b)} of Theorem \ref{chernconnc.}. More precisely, the condition\,\, $\overast{T}(hX,hY)=\Omega(X,Y)=0$ is stronger than the condition  $J\,\,\overast{T}(hX,hY)=0$.
}
\end{rem}
\vspace{8pt}
\begin{center}
{\bf{\large{ Appendix:  Intrinsic Comparison}}}
\end{center}
\par The following table gives a concise comparison concerning
Berwald, Cartan and Chern connections  as well as the
fundamental geometric objects associated with them.

\begin{landscape}
 \begin{center}{\bf{Table 1: Intrinsic Comparison}}
\end{center}
\begin{center}
\small{\begin{tabular}
{|c|c|c|c|c|}\hline
&&&\\
 { Connection} &{  Berwald: \,$\overcirc{D}$ }& { Cartan:\,$D$ } &{ Chern: $\,\overast{D}$ }
\\[0.1 cm]\hline
&&&\\
{   } & $\overcirc{D}_{JX}JY=J[JX,Y]$&
 $D_{JX}JY=\overcirc{D}_{JX}JY+\mathcal{C}(X,Y)$&
$\,\overast{D}_{JX}JY=\overcirc{D}_{JX}JY$
\\[0.1 cm]
{ Expression} & $\overcirc{D}_{hX}JY=v[hX,JY]$&
$D_{hX}JY=\overcirc{D}_{hX}JY+\mathcal{C}'(X,Y)$&
$\,\overast{D}_{hX}JY=D_{hX}JY$\\
& $\overcirc{D}F=0$&${D}F=0$&$\,\overast{D} F=0$
\\[0.1 cm]\hline
&&&\\
{ $h$-torsion} & $\mathfrak{R}$& $\mathfrak{R}$& $\mathfrak{R}$
\\[0.1 cm]
{} $h$v-torsion & $0$& $\mathcal{C}'-F\mathcal{C}$& $\mathcal{C}'$
\\[0.1 cm]
{  $v$-torsion} & $0$& $0$&$0$
\\[0.1 cm]\hline
&&&\\
{ $h$-curvature} & $\overcirc{R}(X,Y)Z=(\,\overcirc{D}_{JZ}\mathfrak{R})(X,Y)$& $ R(X,Y)Z=\overcirc{R}(X,Y)Z+(D_{hX}\mathcal{C}')(Y,Z) $& $\overast{R}(X,Y)Z=R(X,Y)Z$
\\
&&$-(D_{hY}\mathcal{C}')(X,Z)
      +\mathcal{C}'(F\mathcal{C}'(X,Z),Y)$&$-\mathcal{C}(F\mathfrak{R}(X,Y),Z)$\\
    && $-\mathcal{C}'(F\mathcal{C}'(Y,Z),X)+\mathcal{C}(F\mathfrak{R}(X,Y),Z)$&\\[0.2 cm]
{ $hv$-curvature} & $\overcirc{P}(X,Y)Z=v[hX,J[JY,Z]]$& $ P(X,Y)Z =\overcirc{P}(X,Y)Z+(D_{hX}{\mathcal{C}})(Y,Z)$& $\overast{P}(X,Y)Z=\overcirc{P}(X,Y)Z$\\
&-J[JY,F[hX,JZ]]-v[h[hX,JY],JZ]&$-(D_{JY}\mathcal{C}')(X,Z)+\mathcal{C}(F\mathcal{C}'(X,Z),Y) $&$-(\,\,\overast{D}_{JY}\mathcal{C}')(X,Z)$\\
&-J[v[hX,JY],Z]&$+{\mathcal{C}}(F\mathcal{C}'(X,Y),Z)-\mathcal{C}'(F{\mathcal{C}}(Y,Z),X)$&\\
&&$-\mathcal{C}'(F{\mathcal{C}}(X,Y),Z)$&\\[0.2 cm]
{ $v$-curvature} & $0$& $ Q(X,Y)Z={\mathcal{C}}(F{\mathcal{C}}(X,Z),Y) $& $0$\\
&&$-{\mathcal{C}}(F{\mathcal{C}}(Y,Z),X)$&\\\hline
{ $v$-metricity}& not $v$-metrical & $v$-metrical &
not $v$-metrical
\\[0.1 cm]
{$ h$-metricity}& not $h$-metrical&  $h$-metrical&  $h$-metrical
\\[0.1 cm]\hline
\end{tabular}}
\end{center}
\end{landscape}
\providecommand{\bysame}{\leavevmode\hbox
to3em{\hrulefill}\thinspace}
\providecommand{\MR}{\relax\ifhmode\unskip\space\fi MR }
\providecommand{\MRhref}[2]{%
  \href{http://www.ams.org/mathscinet-getitem?mr=#1}{#2}
} \providecommand{\href}[2]{#2}

\end{document}